\renewcommand{\PrintDOI}[1]{\doi{#1}}
\newtheorem{theorem}{Theorem}
\newtheorem{lemma}[theorem]{Lemma}
\newtheorem{proposition}[theorem]{Proposition}
\newtheorem{claim}[theorem]{Claim}
\newtheorem{corollary}[theorem]{Corollary}
\newtheorem{question}[theorem]{Question}
\theoremstyle{definition}
\newtheorem{definition}[theorem]{Definition}
\let\polishlcross=\l
\def\l{\ifmmode\ell\else\polishlcross\fi}
\def\moverlay{\mathpalette\mov@rlay}
\def\mov@rlay#1#2{\leavevmode\vtop{    \baselineskip\z@skip\lineskiplimit-\maxdimen%
    \ialign{\hfil$\m@th#1##$\hfil\cr#2\crcr}}}
\newcommand{\charfusion}[3][\mathord]{
    #1{\ifx#1\mathop\vphantom{#2}\fi
        \mathpalette\mov@rlay{#2\cr#3}
      }
    \ifx#1\mathop\expandafter\displaylimits\fi}
\DeclareFontFamily{U}  {MnSymbolC}{}
\DeclareSymbolFont{MnSyC}         {U}  {MnSymbolC}{m}{n}
\DeclareFontShape{U}{MnSymbolC}{m}{n}{%
    <-6>  MnSymbolC5
   <6-7>  MnSymbolC6
   <7-8>  MnSymbolC7
   <8-9>  MnSymbolC8
   <9-10> MnSymbolC9
  <10-12> MnSymbolC10
  <12->   MnSymbolC12}{}
\DeclareMathSymbol{\powerset}{\mathord}{MnSyC}{180}
\def\namedlabel#1#2{\begingroup
    #2%
    \def\@currentlabel{#2}%
    \phantomsection\label{#1}\endgroup
}
\numberwithin{theorem}{section}
\setlist[itemize]{leftmargin=1cm}
\setlist[enumerate]{leftmargin=1cm}
\renewcommand{\leq}{\leqslant}
\renewcommand{\geq}{\geqslant}
\let\epsilon\varepsilon%
\definecolor{green1}{rgb}{0.0, 0.5, 0.0}
\def\cJ{\mathcal{J}}
\def\EE{\mathbb{E}}
\def\NN{\mathbb{N}}
\def\PP{\mathbb{P}}
\def\QQ{\mathbb{Q}}
\def\RR{\mathbb{R}}
\def\bw{\mathbf{w}}
\def\1{\mathbbm{1}}
\def\<{\langle}
\def\>{\rangle}
\let\theta=\vartheta%
\let\phi=\varphi%
\DeclareMathOperator{\Sh}{Sh}
\DeclareMathOperator{\spread}{spread}
\DeclareMathOperator{\Spread}{Spread}
 \newcommand*\patchAmsMathEnvironmentForLineno[1]{
 \expandafter\let\csname old#1\expandafter\endcsname\csname #1\endcsname
 \expandafter\let\csname oldend#1\expandafter\endcsname\csname end#1\endcsname
 \renewenvironment{#1}
 {\linenomath\csname old#1\endcsname}
 {\csname oldend#1\endcsname\endlinenomath}}
 \newcommand*\patchBothAmsMathEnvironmentsForLineno[1]{
 \patchAmsMathEnvironmentForLineno{#1}
 \patchAmsMathEnvironmentForLineno{#1*}}
\begin{document}
\onehalfspace%
\shortdate%
\yyyymmdddate%
\settimeformat{ampmtime}
\footskip=28pt

\title{Nowhere dense Ramsey Sets}

\author{Vojt\v{e}ch R\"{o}dl}

\author{Marcelo Sales}

\thanks{The authors were supported by NSF grant DMS 1764385. The first author was also supported by NSF grant DMS 2300347 and the second author by US Air Force grant FA9550-23-1-0298.}

\address{Department of Mathematics, Emory University, 
    Atlanta, GA, USA}
\email{vrodl@emory.edu}

\address{Department of Mathematics, University of California, 
    Irvine, CA, USA}
\email{mtsales@uci.edu}

\begin{abstract}
A set of points $S$ in Euclidean space $\RR^d$ is called \textit{Ramsey} if any finite coloring of $\RR^{\infty}$ yields a monochromatic copy of $S$. While characterization of Ramsey set remains a major open problem in Euclidean Ramsey theory, a stronger ``density'' result was considered in \cite{FR90}: If $S$ is a $d$-dimensional simplex, then for any $\mu>0$ there is an integer $d:=d(S,\mu)$ and finite configuration $X\subseteq \RR^d$ such that any subconfiguration $Y\subseteq X$ with $|Y|\geq \mu |X|$ contains a copy of $S$. Complementing this, here we show the existence of $\mu:=\mu(S)$ and of an infinite configuration $X\subseteq \RR^{\infty}$ with the property that any finite coloring of $X$ yields a monochromatic copy of $S$, yet for any finite set of points $Y\subseteq X$ contains a subset $Z\subseteq Y$ of size $|Z|\geq \mu |Y|$ without a copy of $S$.


\end{abstract}

\maketitle

\section{Introduction}\label{sec:intro}

We will find convenient to present our discussion in the framework of $\RR^\infty$, by which we understand a subspace of $\ell^2$ consisting of infinite sequences of real numbers with all but finitely many nonzero entries and with $\RR^\infty$ equipped by the usual euclidean metric. In other words, we can view $\RR^\infty$ as the infinite union $\RR^{\infty}=\bigcup_{d=1}^\infty \RR^d$, where we understand that the copies of $\RR^d$ are being included in one another.

For two configuration of points $A,B \subseteq \RR^\infty$ we will write 
\begin{align*}
    A\rightarrow (B)_r
\end{align*}
to denote the fact that any $r$-coloring of $A$ yields a monochromatic copy of $B$. By a (congruent) copy of $B$, we mean a subconfiguration $B'\subseteq A$ that is isometric to $B$, i.e., that exists a bijective map $\phi: B\rightarrow B'$ such that
\begin{align*}
    ||b_1-b_2||=||\phi(b_1)-\phi(b_2)||
\end{align*}
for every $b_1,b_2 \in B$. Given two configurations $A, B$ we say that $B$ is contained in $A$, and denote $B\subseteq A$, if there exists a copy $A'$ of $A$ such that $B\subseteq A'$ (in the set theoretical sense).

A finite configuration $S$ is said to be \textit{Ramsey} if $\RR^\infty \rightarrow (S)_r$ for every integer $r\geq 1$. The concept was introduced in \cite{EGMRSS73} by Erd\H{o}s, Graham, Montgomery, Rothschild, Spencer and Strauss, who proved that the vertex set of every brick (rectangular parallelepiped) of arbitrary finite dimension is Ramsey. The list of Ramsey configurations was extended by a few more configurations in \cites{FR90, FR86, K91, K92}. On the other hand, the authors of \cite{EGMRSS73} also proved that any Ramsey set is spherical, i.e., all points of $S$ lie on some finite dimensional sphere. They asked if the opposte implication is also true: If any spherical set is Ramsey. In \cite{Gr04} Ron Graham offered $\$1000$ dollars for deciding if this implication holds as well. Based on the evidence coming from known Ramsey configurations Leader, Russel and Walters \cite{LRW12} proposed an alternative conjecture. Calling a finite set transitive if its symmetry group is transitive, i.e., if all points play the same role, their conjecture states that Ramsey sets are precisely the transitive sets together with their subsets.

While the progress on these conjectures was very small, some alternative concepts were considered in \cites{Gr83, Gr85, MR95, FR90, FPRR18, CF18}. In this paper we will introduce another concept related to the Euclidean Ramsey problems. A $d$-dimensional simplex $S$ is a configuration consisting of $d+1$ affinely independent points in $\RR^{\infty}$. In \cite{FR90} it was proved that all simplices are Ramsey. One interesting feature of their proof is that they actually show the following stronger statement.

\begin{theorem}[\cite{FR90}]\label{thm:densitysimplex}
Let $S\subseteq \RR^{\infty}$ be a $d$-dimensional simplex and $0<\mu<1$ real number. Then there exists finite configuration $Y\subseteq \RR^{\infty}$ such that any subconfiguration $Z\subseteq Y$ of size $|Z|\geq \mu |Y|$ contains a copy of $S$.
\end{theorem}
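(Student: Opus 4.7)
My plan is to prove Theorem~\ref{thm:densitysimplex} by induction on the dimension $d$ of the simplex $S = \{p_0, \ldots, p_d\}$. The base case $d=1$ concerns a pair of points at some distance $\delta > 0$: one takes $Y$ to be the vertex set of a regular $n$-simplex of edge length $\delta$, isometrically realized in $\RR^{n-1}$. For $n \geq 2/\mu$, any subset of size at least $\mu|Y|$ contains two points, which by construction sit at distance exactly $\delta$, hence a copy of $S$.

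For the inductive step, let $S' = \{p_0, \ldots, p_{d-1}\}$ and decompose the missing apex as $p_d = q + v$ with $q \in \mathrm{aff}(S')$ and $v \perp \mathrm{aff}(S')$; set $h := \|v\|$. The inductive hypothesis, applied to $S'$ with a sufficiently small density parameter $\mu' \ll \mu$, yields a configuration $Y' \subseteq \RR^{N'}$ witnessing density-Ramsey for $S'$. I would take $Y := Y' \times C \subseteq \RR^{N'} \oplus \RR^{M}$ with the orthogonal Euclidean norm, where $C \subseteq \RR^M$ is a regular $(K-1)$-simplex whose vertices lie at pairwise distance calibrated to the geometry of the orthogonal extension (e.g.\ side length $h\sqrt{2}$), and $K \gg 1/\mu'$. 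By Pythagoras, attaching a $C$-coordinate to a copy of $S'$ in the first factor and then selecting an appropriately offset vertex of $C$ in the second factor produces an isometric copy of $S$ inside $Y$.

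The density argument is a two-step Fubini/pigeonhole step. Given $Z \subseteq Y$ with $|Z| \geq \mu|Y|$, averaging over $c \in C$ yields a slice $Z_c = \{y' : (y',c) \in Z\}$ of density at least $\mu/2$ in $Y'$; by induction, $Z_c$ contains an isometric copy of $S'$. Averaging over $y' \in Y'$ simultaneously produces many fibers $\{c : (y',c) \in Z\}$ of density close to $\mu$, and a pigeonhole on the resulting bipartite ``density graph'' between slices and fibers yields a copy of $S'$ in some slice whose required extending vertex lies in the corresponding fiber. The fact that $C$ is a regular simplex is essential here: \emph{any} distinct pair of its vertices supplies the same orthogonal offset, so the density in a single fiber suffices to realize the extension.

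The main obstacle will be the geometric alignment: generically the projection $q$ lies in the interior of the face $S'$ rather than at a vertex, so the extending point $(y',c')$ must have $y'$ equal to the corresponding affine combination of the base vertices and not one of them itself. Circumventing this requires, at each induction step, replacing $S$ up to isometry by a simplex in which the apex projects onto one of the base vertices, at the cost of one extra orthogonal direction in the ambient space; since we are working in $\RR^\infty$ this is free, but bookkeeping the accumulated dimension and the density losses from each Fubini step (which determines how small $\mu'$ and how large $K, N'$ must be chosen) will be the technically most delicate part of the argument.
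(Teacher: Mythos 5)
This theorem is not proved in the paper at all --- it is imported from \cite{FR90} --- so your proposal has to stand on its own, and unfortunately the inductive step does not close. The problem is the extension from $S'$ to $S$ inside $Y'\times C$. If the base copy of $S'$ sits in a single slice, i.e.\ consists of points $(y_0',c),\dots,(y_{d-1}',c)$, and the apex is $(y',c')$, then Pythagoras forces $||y_i'-y'||^2=||p_i-p_d||^2-||c-c'||^2$ for \emph{every} $i$ simultaneously. Hence $\{y_0',\dots,y_{d-1}',y'\}$ must realize a prescribed $(d+1)$-point configuration inside $Y'$ --- a $d$-dimensional simplex (the original one with the apex ``pulled down''), not a $(d-1)$-dimensional one. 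The inductive hypothesis only guarantees that dense subsets of $Y'$ contain copies of the $d$-point configuration $S'$; it says nothing about which additional points of $Y'$ sit at prescribed distances from such a copy. So the step you need is exactly as hard as the statement you are trying to prove, and the induction is circular. The Fubini/pigeonhole part is not where the difficulty lies.

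Your proposed repair --- ``replacing $S$ up to isometry by a simplex in which the apex projects onto one of the base vertices'' --- cannot work: the foot of the perpendicular from $p_d$ onto $\mathrm{aff}(S')$, written in barycentric coordinates with respect to $p_0,\dots,p_{d-1}$, is determined by the pairwise distances of $S$ and is therefore an isometry invariant; adding ambient orthogonal directions does not change the intrinsic metric. (For a regular simplex the apex projects onto the centroid of the opposite face, never onto a vertex, under any labelling.) The special case where your extension does work, $y'=y_0'$, is precisely the case $||p_i-p_d||^2=||p_i-p_0||^2+||p_0-p_d||^2$ for all $i$, i.e.\ $S$ has an orthogonal corner and embeds in $A\times S'$ for a segment $A$ --- which is why product arguments handle bricks but not general simplices. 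The actual proof in \cite{FR90} is not an induction on dimension: it approximates $S$ by a configuration inside (brick)\,$\times$\,(almost-regular simplex), realizes the almost-regular part in a spread/Hamming-slice configuration (the same architecture as Section~\ref{sec:simplex} of this paper), and the density statement for that single configuration is the density version of the Frankl--R\"odl forbidden-intersections theorem --- a genuinely hard combinatorial input that no soft averaging argument is known to replace.
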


In other words, Theorem \ref{thm:densitysimplex} not only finds a configuration $Y$ such that $Y\rightarrow (S)_r$, but also with the extra property that any subset of density at least $\mu$ contains a copy of $S$. One of the goals of this paper is to show an alternative proof of the fact that simplices are Ramsey where our set $Y$ does not have the density property. The following definition is central for our exposition.

\begin{definition}\label{def:pramsey}
A finite configuration $X\subseteq \RR^\infty$ is called \textit{P-Ramsey} if there exists a configuration $Y\subseteq \RR^{\infty}$ and a real number $\mu>0$ such that the following holds:
\begin{enumerate}
    \item[(i)] $Y\rightarrow (X)_r$ holds for every integer $r\geq 1$.
    \item[(ii)] Every finite subconfiguration $Y'\subseteq Y$ contains a configuration $Z\subseteq Y'$ with $|Z|\geq \mu|Y'|$ such that $Z$ is $X$-free
\end{enumerate}
\end{definition}

The definition is motivated by a similar problem concerning the equivalence of density and Ramsey statement for sum-free sets formulated by Pisier in \cite{G83} (see also \cites{ENR90, RRS22, NRS22}). Note that statement $(ii)$ of the P-Ramsey definition is in contrast with the density statement introduced in Theorem \ref{thm:densitysimplex}, since it says that every finite subconfiguration contains a large set without a copy of $X$. Hence, in some sense, the configuration $Y$ in Definition \ref{def:pramsey} is a witness that the Ramsey property is strictly weaker than the density property for $X$. 

Clearly, if $X$ is P-Ramsey, then $X$ is Ramsey. However, the converse is not so clear. In this paper, we start the study of P-Ramsey configurations by showing the following two results.

\begin{theorem}\label{thm:simplices}
All simplices are P-Ramsey.
\end{theorem}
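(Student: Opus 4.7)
The plan is to realize $Y$ as a union $Y = \bigcup_{r \geq 1} Y_r$ of finite configurations $Y_r \subseteq \RR^\infty$, where each $Y_r$ is Ramsey for $S$ against $r$ colors and, crucially, every vertex of $Y_r$ lies on only boundedly many congruent copies of $S$. I would place the $Y_r$ in pairwise orthogonal affine subspaces of $\RR^\infty$ spaced at mutual distance exceeding $\diam(S)$, which guarantees that any isometric copy of $S$ sitting inside $Y$ lies entirely within a single $Y_r$. This orthogonal-assembly trick is what lets us upgrade ``Ramsey-with-sparse-$S$-hypergraph'' from individual $Y_r$'s to the full P-Ramsey property of $Y$.

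Concretely, the technical core is: for every $r$ construct $Y_r \subseteq \RR^\infty$ with (a) $Y_r \to (S)_r$, and (b) every $y \in Y_r$ is contained in at most $C = C(S)$ congruent copies of $S$ inside $Y_r$, where $C$ is independent of $r$. Granting (a) and (b), a greedy independent-set argument on the $S$-hypergraph shows that every $Y' \subseteq Y_r$ contains an $S$-free $Z \subseteq Y'$ with $|Z| \geq \mu|Y'|$ for $\mu := 1/(C\cdot|S|+1)$: repeatedly pick a surviving vertex, add it to $Z$, and delete it together with all other vertices lying on copies of $S$ through it. To pass from the $Y_r$ to $Y$, fix disjoint blocks of an orthonormal basis of $\RR^\infty$, let $V_r$ be the span of the $r$-th block, and place $Y_r$ as a translate $Y_r \subseteq M_r e_r + V_r$ with $M_r > \diam(S)$ along a basis vector $e_r$ orthogonal to all $V_{r'}$; then two points in distinct $Y_r$ and $Y_{r'}$ lie at distance $\geq \sqrt{M_r^2 + M_{r'}^2} > \diam(S)$, forcing every isometric copy of $S$ inside $Y$ to be contained in a single $Y_r$. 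Property (i) of Definition~\ref{def:pramsey} follows from (a); property (ii) follows by partitioning any finite $Y' \subseteq Y$ as $\bigsqcup_r (Y'\cap Y_r)$, applying the greedy argument to each piece, and unioning the results.

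The main obstacle is step (b). The classical Frankl--R\"odl construction witnessing that simplices are Ramsey has \emph{dense} $S$-copies---indeed, that is precisely the content of Theorem~\ref{thm:densitysimplex}---so a qualitatively new argument is required to obtain a Ramsey witness with bounded $S$-degree. I would pursue a Ne\v{s}et\v{r}il--R\"odl-style partite amalgamation, iteratively gluing smaller Ramsey witnesses for $S$ along partite classes so that each vertex has only $O_S(1)$ extensions to a copy of $S$; this combinatorial amalgamation must then be realized geometrically as a subset of $\RR^\infty$, using the infinite dimension to place each amalgamation step in generic position so that no unintended isometric copies of $S$ appear. Controlling the automorphisms of $S$ (so that each labeled combinatorial edge contributes only a bounded number of unordered copies) and ruling out hidden isometries introduced during the gluing are the principal delicate points.
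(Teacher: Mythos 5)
Your reduction steps are fine as far as they go: the orthogonal placement of the blocks $Y_r$ at mutual distances exceeding $\diam(S)$ does force every congruent copy of $S$ into a single block, and the greedy deletion argument does convert a uniform bound $C(S)$ on the number of $S$-copies through a vertex into an $S$-free subset of proportion $1/(C|S|+1)$. But the entire content of the theorem is concentrated in your step (b) --- producing, for every $r$, a finite $Y_r\subseteq\RR^\infty$ with $Y_r\to(S)_r$ in which every point lies on only $O_S(1)$ congruent copies of $S$ --- and for that step you offer only the hope that a Ne\v{s}et\v{r}il--R\"odl partite amalgamation ``must then be realized geometrically \ldots in generic position.'' This is not a proof sketch but a restatement of the problem: the whole difficulty of Euclidean Ramsey theory is that abstract amalgamations cannot in general be realized by point sets whose isometric copies of $S$ are exactly the intended ones, and you give no mechanism for simultaneously (1) keeping the prescribed copies isometric, (2) excluding unintended isometric copies created by the gluing, and (3) preserving the Ramsey property. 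No known construction achieves bounded $S$-degree for a Ramsey witness, and the paper does not attempt it either.

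The paper's argument works with a weaker and achievable sparseness notion: bounded \emph{fractional chromatic number} of the $S$-copy hypergraph rather than bounded degree. Concretely, a segment $A$ of length $a$ is encoded by the shift graph on $\NN^{(2)}$ via $y_{\{i,j\}}=\beta e_i-\beta\gamma e_j$ with $\gamma$ transcendental over the field generated by the relevant distances (Lemma~\ref{lem:segments}); every finite weighted subset of the shift graph has an independent set of weight $\geq 1/4$ (Claim~\ref{cl:shift}), even though degrees are unbounded. The transcendence of $\gamma$ forces every copy of a configuration $F\subseteq A\times Y$ with $F\not\subseteq Y$ to project onto a genuine edge of the shift graph (Claim~\ref{cl:projection}), so deleting down to an independent set of weight $1/4$ in the projection destroys all copies of $F$; iterating over the factors of a brick gives Theorem~\ref{thm:robust}, and a simplex $S$ is then embedded, via the Frankl--R\"odl/Matou\v{s}ek--R\"odl approximation by spread configurations and almost-regular simplices in bricks, into a product $B\times\Spread(c,\NN)$ with $S\not\subseteq\Spread(c,\NN)$. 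Without some replacement for this machinery --- or an actual construction of bounded-$S$-degree Ramsey witnesses --- your outline does not constitute a proof.
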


We say that a configuration $B\subseteq \RR^{\infty}$ is a $d$-dimensional brick if there exists positive real numbers $a_1,\ldots, a_d\in \RR$ such that $B$ is congruent to the set
\begin{align*}
\left\{(x_1,\ldots,x_d):\:x_i=0 \text{ or }x_i=a_i,\, 1\leq i \leq d\right\}.
\end{align*}

\begin{theorem}\label{thm:bricks}
All bricks are P-Ramsey.
\end{theorem}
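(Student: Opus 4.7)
The plan is to proceed by induction on the dimension $d$ of the brick. The base case $d=1$ is immediate from Theorem~\ref{thm:simplices}, since a one-dimensional brick is a two-point set and hence a $1$-simplex. For the inductive step, let $B$ be a $(d+1)$-dimensional brick and write $B = B' \perp I$ where $B'$ is a $d$-brick and $I = \{0,a\}$ is an edge of length $a$ orthogonal to the affine span of $B'$. Let $(Y_1,\mu_1)$ be a P-Ramsey witness for $B'$ provided by the induction, and $(Y_2,\mu_2)$ a P-Ramsey witness for $I$ provided by the base case, realized in orthogonal subspaces of $\RR^\infty$. My candidate witness for $B$ is the orthogonal sum $Y := Y_1 \oplus Y_2 = \{y_1 + y_2 : y_1 \in Y_1,\, y_2 \in Y_2\}$.

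To verify $Y \to (B)_r$, I would run the standard product argument. Given a coloring $c : Y \to [r]$, a compactness argument applied to $Y_1 \to (B')_r$ produces a finite $Y_1^\star \subseteq Y_1$ with $Y_1^\star \to (B')_r$. Recoloring each $y_2 \in Y_2$ by the restriction $c|_{Y_1^\star + y_2}$ yields a coloring of $Y_2$ in at most $r^{|Y_1^\star|}$ colors, and the Ramsey property of $Y_2$ for $I$ in that many colors supplies $y_2, y_2' \in Y_2$ with $\|y_2 - y_2'\| = a$ such that $c$ agrees on $Y_1^\star + y_2$ and $Y_1^\star + y_2'$. A monochromatic copy of $B'$ inside $Y_1^\star + y_2$ then pairs with its translate in $Y_1^\star + y_2'$ to form a monochromatic copy of $B$.

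For the free-subset property, fix a finite $W \subseteq Y$. I would first thin fiber by fiber: for each $y_2$ in the $Y_2$-projection of $W$, apply the P-Ramsey of $Y_1$ to $W_{y_2} := \{y_1 : y_1 + y_2 \in W\}$ to obtain a $B'$-free subset $Z_{y_2} \subseteq W_{y_2}$ with $|Z_{y_2}| \geq \mu_1 |W_{y_2}|$, and set $W^\star := \bigcup_{y_2}(Z_{y_2} + y_2)$. Then $|W^\star| \geq \mu_1 |W|$, and no \emph{axis-aligned} copy of $B$ lies in $W^\star$, by which I mean no copy of the form $B'' + \{y_2, y_2'\}$ with $B''$ a copy of $B'$ and $\|y_2 - y_2'\| = a$. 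A second round of thinning would invoke the P-Ramsey of $Y_2$ on the projection of $W^\star$ to the second factor, ideally in a form that weights each $y_2$ by $|Z_{y_2}|$, so as to discard all remaining copies of $B$ whose length-$a$ direction lies in the $Y_2$-factor while retaining a $\mu_2$-fraction of $|W^\star|$.

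The main obstacle is that an embedded copy of $B$ in $Y_1 \oplus Y_2$ need not be axis-aligned: the length-$a$ direction of $B$ can tilt arbitrarily across the two orthogonal factors, producing diagonal copies untouched by the fiber-wise thinning. Ruling these out seems to require either a more delicate choice of $Y_2$ whose pairwise distance set is sufficiently generic to preclude the metric identities forced by a tilted copy of $B$, or a strengthening of the inductive hypothesis that produces a P-Ramsey witness enjoying a rigidity property, ensuring every embedded $B'$ respects the intended orthogonal decomposition. A secondary technical point is reconciling the unweighted free-subset guarantee of $Y_2$ with the need to preserve a constant fraction of $|W^\star|$, which I would address by an averaging or iterated-thinning argument. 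Combining these ideas into a clean proof is the core difficulty.
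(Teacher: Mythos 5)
Your skeleton (orthogonal product of a lower-dimensional witness with a segment witness, Ramsey via the product argument, fiber-wise thinning for the free-subset property) matches the paper's strategy, and you have correctly located the two real difficulties. But both are left open, and the first one is the entire content of the proof, so this is a genuine gap rather than a proof.

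The main gap is the one you name: tilted copies of $B$ in $Y_1\oplus Y_2$ that the fiber-wise thinning cannot see. The paper closes it by refusing to treat the segment witness as a black box. The witness $Y_A$ for a segment of length $a$ is built from the shift graph $\Sh(2,\NN)$ with a parameter $\gamma$ (Lemma~\ref{lem:segments}), and $\gamma$ is chosen \emph{transcendental} over the field $L$ generated by $a$ and all distances occurring in $F$ and in the other factor $Y$. Then $Y_A$ realizes exactly four squared distances, of which only $a^2$ lies in $L$; since for $p,q$ in a copy of $F$ inside $Y_A\times Y$ Pythagoras gives $\|\pi_A(p)-\pi_A(q)\|^2=\|p-q\|^2-\|\pi_Y(p)-\pi_Y(q)\|^2\in L$, every nonzero distance in the projection $\pi_A(F)$ is forced to equal $a$; and since $\Sh(2,\NN)$ is triangle-free, $\pi_A(F)$ cannot contain three points. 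Hence every copy of $F$ is automatically ``axis-aligned'': it either sits in a single fiber $Y$ or projects onto a genuine segment of length $a$ (Claim~\ref{cl:projection}). This is exactly the rigidity you say you would need, and without it your construction fails: a generic P-Ramsey witness $Y_2$ for the segment carries none of this structure. Your secondary worry about weights is also resolved at the level of the segment gadget: the independent-set property of $Y_A$ is proved in weighted form from the start (Claim~\ref{cl:shift}), so one weights each point $x$ of the $Y_A$-projection by $|V_x|/|V|$ and performs a \emph{single} round of thinning, in the segment factor only, with $\mu=\tfrac14$; no second round in the other factor is needed, because once the projection contains no segment of length $a$ there are no copies of $F$ left at all.

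A further structural difference worth noting: the paper does not induct on ``$B'$ is P-Ramsey.'' Since it is unclear whether P-Ramseyness is preserved under products, the induction is instead run on the auxiliary notion of a \emph{robust} configuration (Definition~\ref{def:robust}), which does product well: one peels off segments one at a time via Lemma~\ref{lem:segrobust}(b), accumulating robust configurations $Y_i\supseteq A_1\times\cdots\times A_i\times Y$ that still omit $F$, and only at the first step where $F\subseteq A_{\ell+1}\times Y_\ell$ does one invoke the P-Ramsey argument of Lemma~\ref{lem:segrobust}(a). This is the ``strengthening of the inductive hypothesis'' you gesture at, and it is what lets the paper prove the stronger Corollary~\ref{cor:subbricks} for arbitrary subconfigurations of bricks, which is also what Theorem~\ref{thm:simplices} later needs.
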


The paper is organized as follows: In Section \ref{sec:segments} we prove that segments are P-Ramsey. We introduce our main technical result in Section \ref{sec:robust}, a variation of the product theorem for P-Ramsey configurations, and prove Theorem \ref{thm:bricks}. Section \ref{sec:simplex} is devoted to the proof of Theorem \ref{thm:simplices}.

\section{Segments are P-Ramsey}\label{sec:segments}

We prove in this section that segments are P-Ramsey. In fact, we will prove a sligthly stronger statement. Recall that a weight vector $\bw: X\rightarrow [0,1]$ is \textit{stochastical} if $\sum_{x\in X}\bw(x)=1$.

\begin{lemma}\label{lem:segments}
Let $a,\gamma>0$ be real numbers. Then there exists a countable configuration $Y_a \subseteq \RR^{\infty}$ satisfying the following:
\begin{enumerate}
    \item[(i)] The set of squares of all distances of distinct points in $Y_a$ is
    \begin{align*}
        \left\{a^2, \frac{a^2}{1+\gamma+\gamma^2}, \frac{(1+\gamma^2)a^2}{1+\gamma+\gamma^2}, \frac{\gamma^2 a^2}{1+\gamma+\gamma^2}\right\}
    \end{align*}
    \item[(ii)] $Y_a\rightarrow (C)_r$ holds for every $r\geq 1$ and finite configuration $C\subseteq Y_a$. 
    \item[(iii)] For every finite subconfiguration $Y'\subseteq Y_a$ and stochastic weight vector $\bw:Y'\rightarrow [0,1]$, there exists a configuration $Z\subseteq Y'$ with no segments of lenght $a$ such that $\sum_{z\in Z}\bw(z)\geq \frac{1}{4}$.
    \item[(iv)] $Y_a$ does not contain an equilateral triangle of sides of lenght $a$.
\end{enumerate}
\end{lemma}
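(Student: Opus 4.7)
Write $b^2=\tfrac{a^2}{1+\gamma+\gamma^2}$, $d=\gamma b$ and $c=\sqrt{b^2+d^2}$, so that the target squared-distance set in (i) is $\{a^2,b^2,c^2,d^2\}$. A direct computation gives
\[
c^2=b^2+d^2 \qqand a^2=b^2+d^2+bd,
\]
which translate geometrically as follows: three points $O,u,v$ with $|Ou|=b$ and $|Ov|=d$ satisfy $|uv|=c$ when $\angle uOv=90^\circ$ and $|uv|=a$ when $\angle uOv=120^\circ$. These two angles are the only numerical facts I will exploit.

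Before constructing $Y_A$, note what the required properties already force on the $a$-distance graph $G_a$ on $Y_A$: property (iv) requires $G_a$ to be triangle-free; property (iii), via LP duality, requires $\chi_f(G_a)\leq 4$; and property (ii) applied to $C$ a single segment of length $a$ with arbitrary $r$ forces $\chi(G_a)=\infty$. A canonical combinatorial model of a triangle-free graph with $\chi_f\leq 4$ and $\chi\to\infty$ is the Kneser graph $\mathrm{KG}(4k,k)$, which is triangle-free with $\chi_f=4$ and $\chi=2k+2$. The plan is therefore to build $Y_A=\bigcup_{k\geq 1}Y^{(k)}\subseteq\RR^\infty$ inductively so that each finite block $Y^{(k)}$ realises a Kneser-type triangle-free graph (of parameter $\sim k$) as its $a$-distance graph, with only the four permitted distances appearing. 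The blocks will be placed in pairwise orthogonal subspaces around a shared anchor point; the orthogonality freedom of $\RR^\infty$ is used at every step to absorb all new linear constraints, so that new pairwise distances (both within the new block and between the new and old blocks) land in $\{a,b,c,d\}$ and no new $a$-triangle is created.

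With the construction in hand, properties (i) and (iv) are immediate by inspection. Property (iii) follows from $\chi_f(G_a)\leq 4$: for any finite $Y'\subseteq Y_A$ and stochastic weight $\bw$, an optimal fractional $4$-colouring of $G_a(Y')$ produces an independent set $Z$ with $\bw(Z)\geq 1/4$, which is the required $Z$. The main obstacle will be (ii). Given a finite $C\subseteq Y_A$ and $r\geq 1$, the plan is to locate a block $Y^{(k)}$ sufficiently large and rich that every $r$-colouring of its points contains a monochromatic isometric copy of $C$, via a structural Ramsey argument in the spirit of the Ne\v{s}et\v{r}il--R\"odl partite construction applied to the Kneser-like blocks. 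The single hardest point of the proof is the joint Euclidean realisation in $\RR^\infty$: to simultaneously build a Kneser-type $a$-graph (for (iii), (iv) and the chromatic part of (ii)) together with enough combinatorial Ramsey richness (for the rest of (ii)), while confining the realised pairwise distances to exactly the four values prescribed in (i). Once this joint realisation is achieved the remaining arguments are essentially routine.
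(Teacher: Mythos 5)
There is a genuine gap: the proposal never actually constructs $Y_A$. You correctly work out the metric relations among the four admissible distances and correctly isolate the combinatorial constraints on the $a$-distance graph (triangle-free, fractional chromatic number at most $4$, infinite chromatic number), but the step you yourself flag as ``the single hardest point'' --- the joint Euclidean realisation --- is exactly the content of the lemma, and it is left as an unsupported plan. Two specific obstacles are not addressed. First, placing Kneser-type blocks in pairwise orthogonal subspaces through a common anchor forces every cross-block distance to be of the form $\sqrt{\|u\|^2+\|v\|^2}$; confining all of these, together with all intra-block distances, to the four prescribed values is a strong system of constraints that orthogonality alone does not ``absorb,'' and no argument is given that it is satisfiable. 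Second, property (ii) is a full vertex-Ramsey statement for \emph{every} finite subconfiguration $C\subseteq Y_A$, not merely the assertion that the $a$-distance graph has infinite chromatic number; a union of Kneser blocks in orthogonal subspaces has no evident self-similarity, so the appeal to a Ne\v{s}et\v{r}il--R\"odl-style argument is a placeholder rather than a proof.

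The paper resolves all of this with one explicit construction that makes every property fall out simultaneously: index the points by pairs $e=\{i,j\}\in\NN^{(2)}$, $i<j$, and set $y_e=\beta e_i-\beta\gamma e_j$ with $\beta=a/\sqrt{2(1+\gamma+\gamma^2)}$. A direct computation shows the squared distances take exactly the four stated values according to how $e$ and $e'$ intersect, and that $\|y_e-y_{e'}\|=a$ precisely when $e$ and $e'$ are adjacent in the shift graph $\Sh(2,\NN)$. Triangle-freeness of the shift graph gives (iv); a random two-colouring of $\NN$ produces, in expectation, an independent set of weight $1/4$, giving (iii) (this matches your fractional-chromatic-number observation); and (ii) follows immediately from Ramsey's theorem, since any finite $C$ corresponds to a subset of $[n]^{(2)}$ and a monochromatic $W^{(2)}$ with $|W|=n$ contains an isometric copy of $C$. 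If you want to salvage your outline, the shift graph is the structure that simultaneously realises your three combinatorial desiderata \emph{and} admits this clean four-distance embedding; the Kneser graphs do not obviously admit either the embedding or the Ramsey property.
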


The proof of Lemma \ref{lem:segments} uses the following result about independent sets of a shift graph. The shift graph $\Sh(2, \NN)$ is the graph with vertex set $V(\Sh(2,\NN))=\NN^{(2)}$, i.e., the pairs of natural numbers, and edge set
\begin{align*}
    E(\Sh(2,\NN))=\left\{\left\{\{x,y\},\{y,z\}\right\}:\: x<y<z\right\}.
\end{align*}

\begin{claim}\label{cl:shift}
Let $\Sh(2,\NN)$ be the shift graph on the pairs of $\NN$. Then for every finite subset $X\subseteq V(\Sh(2,\NN))$ and stochastical weight vector $\bw:X\rightarrow [0,1]$, there exists an independent set $I\subseteq X$ such that
\begin{align*}
    \sum_{i\in I}\bw(i)\geq \frac{1}{4}.
\end{align*}
\end{claim}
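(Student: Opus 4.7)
The plan is to use the probabilistic method via a random two-coloring of the integers appearing in pairs of $X$. The key structural observation is that in the shift graph, two pairs $\{x,y\}$ and $\{y,z\}$ with $x<y<z$ are adjacent precisely because the element $y$ plays the role of \emph{larger endpoint} in the first pair and \emph{smaller endpoint} in the second. So if we can forbid any integer from serving both roles, the resulting collection of pairs is automatically independent.

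Concretely, let $N = \bigcup_{\{x,y\}\in X}\{x,y\}$, which is finite. Assign to each $n\in N$ an independent, uniformly random label $L(n)\in\{\mathrm{S},\mathrm{L}\}$, and define
\begin{align*}
    I \;=\; \bigl\{\{x,y\}\in X : x<y,\ L(x)=\mathrm{S},\ L(y)=\mathrm{L}\bigr\}.
\end{align*}
Then $I$ is independent in $\Sh(2,\NN)$: if both $\{x,y\}$ and $\{y,z\}$ (with $x<y<z$) belonged to $I$, we would need $L(y)=\mathrm{L}$ from the first pair and $L(y)=\mathrm{S}$ from the second, a contradiction.

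For each pair $\{x,y\}\in X$ with $x<y$, the probability that it lies in $I$ is exactly $\PP[L(x)=\mathrm{S}]\cdot\PP[L(y)=\mathrm{L}] = \tfrac{1}{4}$. By linearity of expectation and since $\bw$ is stochastic,
\begin{align*}
    \EE\!\left[\sum_{i\in I}\bw(i)\right] \;=\; \sum_{\{x,y\}\in X}\bw(\{x,y\})\cdot\tfrac{1}{4} \;=\; \tfrac{1}{4}.
\end{align*}
Hence some labeling achieves $\sum_{i\in I}\bw(i)\ge \tfrac{1}{4}$, as required.

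There is essentially no obstacle: the only ``idea'' needed is recognizing that independent sets in $\Sh(2,\NN)$ correspond to orientations of the underlying pair-hypergraph in which no vertex is both a head and a tail, which is precisely encoded by an $\{\mathrm{S},\mathrm{L}\}$-labeling. I would also note that the constant $1/4$ is what one would expect: since $\chi_f(\Sh(2,n))$ is small (bounded by $4$ via this very argument, though $2$ suffices in the unweighted fractional sense), random $2$-bit labelings give the right order of magnitude automatically.
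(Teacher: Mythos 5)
Your proof is correct and is essentially identical to the paper's: both assign each integer an independent uniform bit (your $\{\mathrm{S},\mathrm{L}\}$ labels versus the paper's $\{0,1\}$ coloring), take the pairs whose smaller endpoint gets one label and larger endpoint the other, observe this set is independent in $\Sh(2,\NN)$, and conclude by linearity of expectation that some labeling achieves weight at least $\tfrac{1}{4}$.
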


\begin{proof}
Let $X\subseteq \NN^{(2)}$ be a finite subset of vertices of $\Sh(2,\NN)$ and let $\bw:X\rightarrow [0,1]$ be a stochastic weight vector. Consider a random coloring $c:\NN\rightarrow \{0,1\}$, where each integer $n$ is colored independently with probability
\begin{align*}
    \PP(c(n)=0)=\frac{1}{2}.
\end{align*}
Let $X_{0,1}$ be the random set defined by
\begin{align*}
    X_{0,1}=\left\{\{x,y\}\in X:\: x<y \text{ and } c(x)=0,\, c(y)=1\right\}.
\end{align*}
That is, $X_{0,1}$ are the ordered pairs of $X$ such that the first integer is of color $0$ and the last one of color $1$. One can see that $X_{0,1}$ is an independent set in $\Sh(2,\NN)$. Moreover, by letting
\begin{align*}
    Z_{0,1}=\sum_{x\in X_{0,1}}\bw(x)
\end{align*}
we have that 
\begin{align*}
    \EE(Z_{0,1})=\sum_{\{x,y\}\in X}\PP\big(\{c(x)=0\} \wedge \{c(y)=1\}\big)\bw(\{x,y\})=\sum_{\{x,y\}\in X}\frac{1}{4}\bw(\{x,y\})=\frac{1}{4}.
\end{align*}
Thus, by the first moment, with positive probability there is a coloring $c$ such that $X_{0,1}$ is an independent set satisfying the statement of the claim.
\end{proof}

\begin{proof}[Proof of Lemma \ref{lem:segments}]
Let $\{e_i\}_{i=1}^\infty$ be the standard basis of $\RR^\infty$. We construct a configuration $Y_a=\{y_e\}_{e\in \NN^{(2)}}\subseteq \RR^\infty$ by associating to each pair $e=\{i,j\} \in \NN^{(2)}$, $i<j$, the point
\begin{align*}
    y_e=\beta e_i -  \beta \gamma e_j,
\end{align*}
where $\beta=\frac{a}{\sqrt{2(1+\gamma+\gamma^2)}}$. We claim that the configuration $Y_a$ satisfies properties $(i)$, $(ii)$, $(iii)$ and $(iv)$ of Lemma \ref{lem:segments}.

Property $(i)$ comes from the fact that given two pairs $e=\{i,j\}$, $e'=\{i',j'\} \in \NN^{(2)}$ the square of the distance between $y_e$ and $y_{e'}$ can assume the following values
\begin{align*}
    ||y_e-y_{e'}||^2=\begin{cases}
    2\beta^2\gamma^2, \quad & \text{if }i=i'\\
    2\beta^2, \quad & \text{if }j=j'\\
    2\beta^2(1+\gamma+\gamma^2), \quad & \text{if }i=j' \text{ or } i'=j\\
    2\beta^2(1+\gamma^2), \quad & \text{if } \{i,j\}\cap\{i',j'\}=\emptyset.
    \end{cases}
\end{align*}
By plugging $\beta=\frac{a}{\sqrt{2(1+\gamma+\gamma^2)}}$ we obtain the set of distances of the statement. Moreover, another important consequence of the computation is that $||y_e-y_{e'}||=a$ if and only if $e$ and $e'$ are adjacent in $\Sh(2,\NN)$.

In order to prove $(ii)$, consider a finite configuration $C\subseteq Y_a$. Naturally $C$ can be written as $C=\{y_e\}_{e\in E}$ for some $E\subseteq \NN^{(2)}$. Since $E$ is finite, there exists an integer $n$ such that $E\subseteq [n]^{(2)}$. An $r$-coloring of $Y_a$ corresponds to an $r$-coloring of $\NN^{(2)}$. By Ramsey theorem \cite{R29}, there exists a set $W\subseteq \NN$ of size $n$ such that $W^{(2)}$ is monochromatic. Hence, this configuration $C'=\{y_e\}_{e\in W^{(2)}}$ is monochromatic. This implies property $(ii)$, since $C'$ contains a copy of $C$.

To check property $(iii)$, let $Y'\subseteq Y_a$ be a finite subconfiguration of $Y_a$. By our construction, this corresponds to a finite set $X\subseteq V(\Sh(2,\NN))$. Let $\bw':X\rightarrow [0,1]$ be the stochastic weight vector given by $\bw'(x)=\bw(y)$, where $y\in Y'$ is the corresponding point to $x\in X$. Claim \ref{cl:shift} applied to the vector $\bw'$ gives us an independent set $I\subseteq X$ in $\Sh(2,\NN)$ such that $\sum_{i\in I}\bw'(i)\geq \frac{1}{4}$. This corresponds to a subconfiguration $Z\subseteq Y'$ with no segments of length $a$ and such that
\begin{align*}
    \sum_{z\in Z}\bw(z)\geq \frac{1}{4}.
\end{align*}

Finally, property $(iv)$ follows from the fact that an equilateral triangle of sides of length $a$ corresponds to a triangle in $\Sh(2,\NN)$ and $\Sh(2,\NN)$ is triangle free.
\end{proof}

\section{Robust configurations}\label{sec:robust}

One of the main techniques developed in \cite{EGMRSS73} to prove that a configuration is Ramsey is the product theorem. The next statement follows directly from their work (see Theorem 20, \cite{EGMRSS73})

\begin{theorem}[\cite{EGMRSS73}]\label{thm:product}
Let $A$ and $B$ be finite configurations which are Ramsey and $X,Y\subseteq \RR^{\infty}$ be such that $X\rightarrow (A)_r$ and $Y\rightarrow (B)_r$ for every $r\geq 1$. Then $X\times Y \rightarrow (C)_r$ for $C\subseteq A\times B$ for every $r\geq 1$.
\end{theorem}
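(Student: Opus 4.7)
The plan is to use a standard ``column-coloring'' Ramsey iteration, exploiting the fact that the hypotheses give $X\rightarrow(A)_r$ and $Y\rightarrow(B)_r$ for \emph{every} $r\geq 1$, not merely for the single value of $r$ appearing in the conclusion. The freedom to choose a much larger number of colors on the $X$-side is what lets one ``decouple'' the two factors.

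First one fixes an isometric realization of $X\times Y$ inside $\RR^\infty$ by placing $X$ and $Y$ in disjoint coordinate blocks, so that the Euclidean distance satisfies
\[
\|(x_1,y_1)-(x_2,y_2)\|^2=\|x_1-x_2\|^2+\|y_1-y_2\|^2.
\]
Under this identification, if $A'\subseteq X$ is a congruent copy of $A$ and $B'\subseteq Y$ is a congruent copy of $B$, then $A'\times B'\subseteq X\times Y$ is a congruent copy of $A\times B$, and hence contains a congruent copy of any $C\subseteq A\times B$.

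Now fix an $r$-coloring $\chi\colon X\times Y\to[r]$ and enumerate the finite configuration $Y=\{y_1,\ldots,y_N\}$. Define the auxiliary $r^N$-coloring $\psi$ of $X$ by reading off the column pattern:
\[
\psi(x)=\bigl(\chi(x,y_1),\ldots,\chi(x,y_N)\bigr)\in [r]^N.
\]
Since $X\rightarrow(A)_{r'}$ holds for every $r'\geq 1$, in particular for $r'=r^N$, there is a congruent copy $A'\subseteq X$ of $A$ that is $\psi$-monochromatic. Equivalently, there exists a vector $(c_1,\ldots,c_N)\in[r]^N$ with $\chi(a,y_i)=c_i$ for every $a\in A'$ and every $i\in[N]$. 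This lets one push the coloring down to the second factor: define $\phi\colon Y\to[r]$ by $\phi(y_i):=c_i$, which is well-defined precisely because $\chi(\cdot,y_i)$ has become constant on $A'$. Applying $Y\rightarrow(B)_r$ to $\phi$ produces a congruent copy $B'\subseteq Y$ of $B$ on which $\phi$ is constant, say with value $c$. Then $\chi(a,b)=c$ for every $a\in A'$ and $b\in B'$, so $A'\times B'$ is $\chi$-monochromatic, and it contains a monochromatic copy of $C$ by the preceding paragraph.

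There is no real combinatorial obstacle: the argument is a two-step Ramsey cascade and the only place one must be careful is in the isometric bookkeeping -- checking that the disjoint-coordinate embedding of $X\times Y$ into $\RR^\infty$ actually yields the product metric, and that the Cartesian product of congruent copies is a congruent copy of the product. Both are standard and form the customary setup of the product theorem.
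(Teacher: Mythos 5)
This theorem is not proved in the paper at all --- it is quoted from \cite{EGMRSS73} (Theorem 20) --- so there is no in-paper argument to compare against. Your two-step Ramsey cascade (recolor $X$ by column patterns, transfer the resulting constant pattern to a coloring of $Y$, then take the product of the two monochromatic copies) is the standard proof of the product theorem, and the isometric bookkeeping is handled correctly: under the disjoint-block embedding the metric on $X\times Y$ is the product metric, so $A'\times B'$ is a congruent copy of $A\times B$ and hence contains a congruent copy of any $C\subseteq A\times B$.

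There is one genuine gap as written: you ``enumerate the finite configuration $Y=\{y_1,\ldots,y_N\}$'', but the hypothesis only says $Y\subseteq\RR^\infty$ with $Y\rightarrow(B)_r$ for every $r$; nothing makes $Y$ finite, and in fact every application of Theorem~\ref{thm:product} in this paper takes $X$ and $Y$ to be countably infinite (robust) configurations, e.g.\ $Y_A\times Y$ in the proof of Lemma~\ref{lem:segrobust}. Your auxiliary coloring $\psi(x)=(\chi(x,y_1),\ldots,\chi(x,y_N))$ requires $N$ to be finite, since you invoke $X\rightarrow(A)_{r^N}$. The repair is the standard compactness reduction: since $B$ is finite, every congruent copy of $B$ in $Y$ is a finite set, so for each finite $Y_0\subseteq Y$ the set of colorings in the compact space $[r]^{Y}$ whose restriction to $Y_0$ admits no monochromatic copy of $B$ is closed, and these sets have the finite intersection property if every finite $Y_0$ fails $Y_0\rightarrow(B)_r$; Tychonoff then produces a bad $r$-coloring of all of $Y$, contradicting $Y\rightarrow(B)_r$. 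Hence some finite $Y_0\subseteq Y$ already satisfies $Y_0\rightarrow(B)_r$, and you run your argument with $Y_0$ in place of $Y$ (note that $X$ may stay infinite, since $X\rightarrow(A)_{r^{N}}$ is applied directly). With that one step added, your proof is complete.
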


Unfortunately, it is not clear if a similar statement holds for P-Ramsey configurations. The goal of this sections is to develop a partial version of the product theorem that will enable us to prove Theorems \ref{thm:simplices} and \ref{thm:bricks}.

\begin{definition}\label{def:robust}
We say that a countable configuration $Y$ is \textit{robust} if for every finite configuration $C$ with $C\subseteq Y$ we have that $Y\rightarrow (C)_r$ for every $r\geq 1$.
\end{definition}

Note for instance, that by property $(ii)$ of Lemma \ref{lem:segments} we have the following.

\begin{corollary}\label{cor:Y_arobust}
Let $Y_a$ be the configuration obtained by Lemma \ref{lem:segments}. Then $Y_a$ is a robust configuration.
\end{corollary}

The following is our main result in the section. Recall that by $B\subseteq A$ we understand that there exists $B'$ copy of $B$ such that $B'\subseteq A$.

\begin{theorem}\label{thm:robust}
Let $B$ be a brick and $Y$ be a robust configuration. If $F\subseteq B\times Y$ and $F\not\subseteq Y$, then $F$ is P-Ramsey.
\end{theorem}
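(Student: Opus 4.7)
I would take the configuration $W := \hat B \times Y$, where $\hat B$ is built as follows: writing $B = A_1 \times \cdots \times A_d$ as the product of its $d$ side-segments, let
\[
\hat B := Y_{A_1} \times Y_{A_2} \times \cdots \times Y_{A_d},
\]
with each $Y_{A_i}$ the robust segment configuration supplied by Lemma~\ref{lem:segments} (applied with a parameter $\gamma_i$ to be chosen generically). I take $\mu := 4^{-d}$, and verify conditions (i) and (ii) of Definition~\ref{def:pramsey}.

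The arrow $W \to (F)_r$ follows from iterating the product theorem (Theorem~\ref{thm:product}). Corollary~\ref{cor:Y_Arobust} asserts that each $Y_{A_i}$ is robust, so in particular $Y_{A_i} \to (A_i)_r$ for every $r$. Since each sub-brick of $B$ is Ramsey, $d$ applications of the product theorem yield $\hat B \to (B)_r$ for every $r$. Now $\pi_Y(F) \subseteq Y$ is Ramsey (using robustness of $Y$), and one final application of the product theorem with $X = \hat B$ together with $F \subseteq B \times \pi_Y(F)$ gives $W \to (F)_r$ for every $r$.

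For the P-property, I iterate the weighted density statement of Lemma~\ref{lem:segments}(iii) across the $d$ segment-factors of $\hat B$. Given a finite $W' \subseteq W$: at step $i = 1, \dots, d$, I project the currently surviving portion of $W'$ onto $Y_{A_i}$, weight each $y \in Y_{A_i}$ by its relative mass within the surviving set, and extract an $A_i$-free subset $Z_i \subseteq Y_{A_i}$ of weight at least $1/4$ from Lemma~\ref{lem:segments}(iii); I then restrict the surviving portion to have $i$-th coordinate in $Z_i$. After $d$ steps I obtain $Z \subseteq W'$ with $|Z| \geq 4^{-d}|W'|$ and $\pi_{\hat B}(Z) \subseteq Z_1 \times \cdots \times Z_d$.

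The technical core---and the main obstacle---is to verify that this $Z$ is $F$-free. The hypothesis $F \not\subseteq Y$ ensures that any copy of $F$ in $W$ projects nontrivially onto $\hat B$; to derive a contradiction with $\pi_{\hat B}(Z) \subseteq Z_1 \times \cdots \times Z_d$, I need a rigidity statement to the effect that such a projection must realize, for some index $i$, a copy of the segment $A_i$ lying entirely within the factor $Y_{A_i}$---violating the $A_i$-freeness of $Z_i$. I expect this rigidity to be established by a careful, generic choice of the parameters $\gamma_i$ making the distance spectra of the $Y_{A_i}$'s (and of $Y$) sufficiently incommensurable so that no distance appearing in $F$ can be realized by a ``skew'' sum of distances across different factors. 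Carrying out this incommensurability analysis is where the main work lies.
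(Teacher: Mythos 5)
Your overall architecture is close in spirit to the paper's: the paper also replaces each side $A_i$ of the brick by the configuration $Y_{A_i}$ of Lemma~\ref{lem:segments}, uses Theorem~\ref{thm:product} for the arrow, and uses Lemma~\ref{lem:segments}(iii) to thin a segment factor. (The paper proceeds sequentially via Lemma~\ref{lem:segrobust}, adjoining factors $Y_{A_i}$ one at a time and stopping at the first index at which $F$ embeds, so that only a single factor ever needs to be thinned and $\mu=1/4$ suffices; your simultaneous thinning of all $d$ factors would give $\mu=4^{-d}$, which is still acceptable for the statement as written.)

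However, the step you defer --- the ``rigidity statement'' guaranteeing that any copy of $F$ in $\hat B\times Y$ projects onto some factor $Y_{A_i}$ as a genuine copy of $A_i$ --- is not a technicality to be filled in later; it is the entire content of the proof, and your proposal does not contain it. The paper's mechanism (Claim~\ref{cl:projection}) is: let $L$ be the field generated over $\QQ$ by $a$ and by all distances occurring in $F$ and in the complementary factor, and choose $\gamma$ transcendental over $L$. Then for two points $p,q$ of a copy of $F$ with distinct projections $p',q'$ onto $Y_A$, Pythagoras gives $\|p'-q'\|^2=\|p-q\|^2-\|p''-q''\|^2\in L$, while Lemma~\ref{lem:segments}(i) says $\|p'-q'\|^2$ lies in a four-element set of which only $a^2$ belongs to $L$; hence $\|p'-q'\|=a$, and property (iv) (no equilateral triangle of side $a$, coming from triangle-freeness of the shift graph) then forces $\pi_A(F)$ to be either a single point or a single segment of length $a$. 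Since $F\not\subseteq Y$, not every projection can be a point, so some thinned factor would have to contain a segment of length $a_i$, the desired contradiction. To run this in your all-at-once setting you would need the $\gamma_i$ to be algebraically independent over $\QQ(a_1,\dots,a_d,D_Y,D_F)$, so that for each $i$ the argument applies with the remaining factors absorbed into $L$; ``generic'' or ``sufficiently incommensurable'' is not a proof, and without this argument your claim that $Z$ is $F$-free is unsubstantiated. As it stands, the proposal has a genuine gap at its central step.
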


Theorem \ref{thm:robust} is a consequence of the following lemma.

\begin{lemma}\label{lem:segrobust}
Let $Y$ be a robust configuration, $A$ be a segment and $F$ a finite configuration with $|F|>1$. Then the following holds:
\begin{enumerate}
    \item[(a)] If $F\subseteq A\times Y$ and $F\not\subseteq Y$, then $F$ is P-Ramsey.
    \item[(b)] If $F\not\subseteq A\times Y$, then there exists a robust configuration $\tilde{Y}$ such that $A\times Y\subseteq \tilde{Y}$ and $F\not\subseteq \tilde{Y}$.
\end{enumerate}
\end{lemma}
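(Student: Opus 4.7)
The plan is to take $\tilde{Y} := Y_A \times Y$, where $Y_A$ is the configuration from Lemma~\ref{lem:segments} associated to the segment $A$, with the parameter $\gamma > 0$ chosen generically as specified below. This single construction will handle both parts. First, $\tilde{Y}$ is robust: any finite $C \subseteq \tilde{Y}$ lies in a product $A_0 \times B_0$ with finite $A_0 \subseteq Y_A$ and $B_0 \subseteq Y$, and the product theorem (Theorem~\ref{thm:product}) applied to $Y_A \to (A_0)_r$ (by Corollary~\ref{cor:Y_Arobust}) and $Y \to (B_0)_r$ (by robustness of $Y$) gives $\tilde{Y} \to (C)_r$. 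Clearly also $A \times Y \subseteq \tilde{Y}$, since $A \subseteq Y_A$.

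The technical heart of the proof is the following \emph{key claim}: for a suitable choice of $\gamma$, every congruent copy $F' \subseteq \tilde{Y}$ of $F$ has $S := \pi_{Y_A}(F')$ equal either to a single point or to a segment of length $a$. To prove this, for any pair $p, q \in F'$ the product structure gives $\|p - q\|^2 = \|\pi_{Y_A}(p) - \pi_{Y_A}(q)\|^2 + \|\pi_Y(p) - \pi_Y(q)\|^2$, and since $F' \cong F$ we have $\|p - q\| \in D(F)$, while $\|\pi_Y(p) - \pi_Y(q)\| \in D(Y) \cup \{0\}$, where $D(F)$ and $D(Y)$ denote distance sets. Since $F$ is finite and $Y$ is countable, the set $\Sigma := \{d_F^2 - d_Y^2 : d_F \in D(F),\, d_Y \in D(Y) \cup \{0\}\}$ is countable. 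By property~(i) of Lemma~\ref{lem:segments}, $\|\pi_{Y_A}(p) - \pi_{Y_A}(q)\|^2$ lies in $\{0, a^2, \alpha, \beta, \delta\}$, where $\alpha, \beta, \delta$ denote the three extra squared distances, each a nonconstant rational function of $\gamma$. Hence for all but countably many $\gamma > 0$ the values $\alpha(\gamma), \beta(\gamma), \delta(\gamma)$ all avoid $\Sigma$; fixing such a $\gamma$ forces $\|\pi_{Y_A}(p) - \pi_{Y_A}(q)\| \in \{0, a\}$ for every pair in $F'$. Combined with property~(iv) of Lemma~\ref{lem:segments} (no equilateral triangle of side $a$), this forces $|S| \le 2$, with $|S| = 2$ only if $S$ is a segment of length $a$.

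Granted the key claim, both parts follow. For (b): any copy $F' \subseteq \tilde{Y}$ would have $S$ either a single point (so $F' \subseteq \{z\} \times Y$, giving $F \subseteq Y \subseteq A \times Y$) or a segment of length $a$ (so $F' \subseteq \{z_1, z_2\} \times Y \simeq A \times Y$, giving $F \subseteq A \times Y$); both contradict $F \not\subseteq A \times Y$. For (a), I will verify that $\tilde{Y}$ witnesses $F$ is P-Ramsey with $\mu = 1/4$. The Ramsey property $\tilde{Y} \to (F)_r$ again follows from the product theorem, taking $B_0 \subseteq Y$ to be the projection to $Y$ of an embedded copy of $F$ in $A \times Y$. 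For the density property, given a finite $W \subseteq \tilde{Y}$, define the stochastic weight $\bw(y_A) := |W \cap (\{y_A\} \times Y)| / |W|$ on $\pi_{Y_A}(W) \subseteq Y_A$ and apply Lemma~\ref{lem:segments}(iii) to obtain $Z \subseteq \pi_{Y_A}(W)$ containing no segment of length $a$ with $\sum_{z \in Z} \bw(z) \geq 1/4$. Then $W' := W \cap (Z \times Y)$ satisfies $|W'| \geq |W|/4$, and $W'$ is $F$-free: any copy $F' \subseteq W'$ would have $S \subseteq Z$; the case $|S| = 2$ is excluded since $Z$ contains no segment of length $a$, while $|S| = 1$ would give $F \subseteq Y$, contradicting the hypothesis $F \not\subseteq Y$.

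The main obstacle is establishing the key claim; beyond it the argument is essentially bookkeeping. The delicacy lies in exploiting the four distinct distances afforded by Lemma~\ref{lem:segments}(i): three of them must be kept algebraically unavailable to any copy of $F$ in $\tilde{Y}$, leaving $a$ as the only non-trivial $Y_A$-distance that can arise within such a copy. This is precisely what bridges the combinatorial segment-avoidance of property~(iii) with the geometric $F$-freeness we ultimately need.
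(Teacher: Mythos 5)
Your proposal is correct and follows essentially the same route as the paper: form $\tilde{Y}=Y_A\times Y$ with a generically chosen $\gamma$, show via Pythagoras and property~(i) of Lemma~\ref{lem:segments} that any copy of $F$ projects onto $Y_A$ as a point or a segment of length $a$ (with property~(iv) ruling out larger projections), and then run the weight-vector argument of Claim~\ref{cl:shift} to get $\mu=1/4$. The only difference is cosmetic: the paper certifies genericity by taking $\gamma$ transcendental over the field $\QQ(a,D_Y,D_F)$, whereas you exclude the countably many $\gamma$ for which the three nonconstant rational functions hit the countable set $\Sigma$ --- two equivalent devices for the same purpose.
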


\begin{proof}
Let $a$ be the length of the segment $A$, also let $D_Y$ be the set of all distances in $Y$ and let $D_F$ be the set of all distances in $F$. Consider the field extension $L=\QQ(a,D_Y,D_F)$ of $\QQ$, where $\QQ(a, D_Y, D_F)$ is the minimal field containing $a$, $D_Y$, $D_F$ and $\QQ$. Since $D_Y\cup D_F\cup \{a\}$ is countable, we have that $L$ is a countable extension of $\QQ$ and consequently $L\neq \RR$. Let $\gamma\in \RR$ be a transcedental number over $L$, i.e., 
\begin{align}\label{ceq:transcedental}
\text{there is no polynomial $p\in L[x]$ such that $p(\gamma)=0$.}
\end{align}

Let $Y_a$ be the configuration obtained by Lemma \ref{lem:segments} with parameters $a$ and $\gamma$. By property $(i)$ the set of all square distances is given by 
\begin{align*}
    \left\{a^2,\, \frac{a^2}{1+\gamma+\gamma^2},\, \frac{(1+\gamma^2)a^2}{1+\gamma+\gamma^2},\, \frac{\gamma^2 a^2}{1+\gamma+\gamma^2}\right\}.
\end{align*}
Note that while $a^2\in L$, due to the fact that $\gamma$ is transcedental, the other three distances are not in $L$. Indeed, to illustrate, assume for example that $\frac{a^2}{1+\gamma+\gamma^2} \in L$. Then there exists $b\in L$ such that
\begin{align*}
    \frac{a^2}{1+\gamma+\gamma^2}=b.
\end{align*}
This implies that $\gamma$ is a root of the polynomial $p\in L[x]$ given by $p(x)=bx^2+bx+b-a^2$, which contradicts the assumption that $\gamma$ is transcedental over $L$.

Before we address statements $(a)$ and $(b)$ of Lemma \ref{lem:segrobust}, we will prove the following claim. Let $\pi_A:Y_a\times Y\rightarrow Y_a$ and $\pi_Y: Y_a\times Y\rightarrow Y$ be the projection maps of $Y_a\times Y$ on $Y_a$ and $Y$, respectively.

\begin{claim}\label{cl:projection}
Let $F\subseteq Y_a\times Y$. Then either $F\subseteq Y$ or $\pi_A(F)$ is a copy of $A$.
\end{claim}

\begin{proof}
If $\pi_A(F)$ is a single point, then $F\subseteq Y$ and there is nothing to do. Thus, we may assume that $|\pi_A(F)|\geq 2$. Let $p,q$ be two points of $F$ such that $p'=\pi_A(p)$ and $q'=\pi_A(q)$ are distinct. We claim that $||p'-q'||=a$. Let $p''=\pi_Y(p)$ and $q''=\pi_Y(q)$. Since all distances from points of $F$ and $Y$ are in $L$, we have that $||p-q||^2, ||p''-q''||^2 \in L$. Thus, by Pythagoras theorem we have
\begin{align}\label{ceq:distance}
    ||p'-q'||^2=||p-q||^2-||p''-q''||^2 \in L.
\end{align}
On the other hand, by Lemma \ref{lem:segments} we have that
\begin{align*}
    ||p'-q'||^2\in \left\{a^2,\, \frac{a^2}{1+\gamma+\gamma^2},\, \frac{(1+\gamma^2)a^2}{1+\gamma+\gamma^2},\, \frac{\gamma^2 a^2}{1+\gamma+\gamma^2}\right\}.
\end{align*}
Due to our choice of $\gamma$, the value $a^2$ is the only one of the four values in the field $L$. Hence, due to (\ref{ceq:distance}) we have $||p'-q'||=a$.

Suppose that $|\pi_A(F)|\geq 3$. Then by the previous paragraph, there is an equilateral triangle of sides of length $a$ in $Y_a$, which contradicts property $(iv)$ of Lemma \ref{lem:segments}. Therefore, $\pi_A(F)$ is a segment of length $a$.
\end{proof}

Now we prove statement $(a)$ of Lemma \ref{lem:segrobust}. Let $F$ be a finite configuration, $|F|>1$, such that $F\subseteq A\times Y$ and $F\not\subseteq Y$ for a segment $A$ and a robust configuration $Y$. By Corollary \ref{cor:Y_arobust}, the configuration $Y_a$ is robust, wher $Y_a$ is defined with parameters $a$ and $\gamma$ satisfying (\ref{ceq:transcedental}). We claim that $Y_a\times Y$ testifies that $F$ is P-Ramsey.

To check property $(i)$ of Definition \ref{def:pramsey} we note that since $F\subseteq A\times Y$, there exists a finite configuration $C$ such that $F\subseteq A\times C$. Because $Y$ is robust, then $Y\rightarrow (C)_r$ for every $r\geq 1$. Lemma \ref{lem:segments} gives us that $Y_a\rightarrow (A)_r$ for every $r\geq 1$. Thus, by Theorem \ref{thm:product}, we have that $Y_a\times Y\rightarrow (F)_r$ for every $r\geq 1$.

In order to prove property $(ii)$ of Definition \ref{def:pramsey}, let $V\subseteq Y_a\times Y$ be a finite subconfiguration. Since $V$ is finite, there exists a finite subconfiguration $X\subseteq Y_a$ such that $V\subseteq X\times Y$. We partition $V$ into $V=\bigcup_{x\in X}V_x$ where $V_x=\pi_A^{-1}(x)$ are the elements of $V$ that projects to the point $x$ on $Y_a$. Let $\bw:X\rightarrow [0,1]$ be the stochastic weight vector defined by
\begin{align*}
    \bw(x)=\frac{|V_x|}{|V|}.
\end{align*}
By property $(iii)$ of Lemma \ref{lem:segments}, there exists a subconfiguration $Z\subseteq X$ with no segments of length $a$ such that
\begin{align}\label{ceq:eq1}
    \sum_{z\in Z} \bw(z)\geq \frac{1}{4}. 
\end{align}
Consider the configuration $U=\bigcup_{z\in Z} V_z$. We claim that $U$ does not contain a copy of $F$. Suppose to the contrary that $F\subseteq U$. Since $F\not\subseteq Y$, then by Claim \ref{cl:projection} the projection $\pi_A(U)$ contains a segment of length $a$. However, $\pi_A(U)=Z$, which contains no segment of length $a$, yielding a contradiction. Moreover, by (\ref{ceq:eq1})
\begin{align*}
|U|=\sum_{z\in Z}|V_z|=\sum_{z\in Z}|V|\bw(z)\geq \frac{1}{4}|V|,
\end{align*}
which proves property $(ii)$ of Definition \ref{def:pramsey} with $\mu=\frac{1}{4}$. Hence, $F$ is P-Ramsey.

Now we prove statement $(b)$ of Lemma \ref{lem:segrobust}. Suppose that $F\not\subseteq A\times Y$. We claim that $\tilde{Y}=Y_a\times Y$ is a robust configuration such that $F\not \subseteq Y_a\times Y$. We first show that $\tilde{Y}$ is robust. If $C\subseteq \tilde{Y}$ is a finite configuration, then there exist finite configurations $W_A\subseteq Y_a$ and $W\subseteq Y$ such that $C\subseteq W_A\times W$. Since $Y_a$ and $Y$ are robust, we have that $Y_a\rightarrow (W_A)_r$ and $Y\rightarrow (W)_r$ for every $r\geq 1$. By Theorem \ref{thm:product}, we obtain that $\tilde{Y}=Y_a\times Y \rightarrow (C)_r$, which proves that $\tilde{Y}$ is robust.

Assume by contradiction that $F\subseteq Y_a\times Y$. Then by Claim \ref{cl:projection}, we either have that $F\subseteq Y$ or $\pi_A(F)$ is a copy of $A$. In both cases, we have that $F\subseteq A\times Y$, which contradicts the hypothesis.
\end{proof}

We are now able to prove Theorem \ref{thm:robust}.

\begin{proof}[Proof of Theorem \ref{thm:robust}]
Let $B$ be a $d$-dimensional brick and let $Y$ be a given robust configuration. We will write $B=A_1\times\ldots \times A_d$ where $A_i$ is a segment. By the hypothesis of Theorem \ref{thm:robust} we are also given $F$ satisfying $F\subseteq B\times Y$ and $F\not\subseteq Y$. Our goal is to prove that $F$ is P-Ramsey. For that we will repeteadly apply Lemma \ref{lem:segrobust}. We will construct a sequence $Y_0,\ldots, Y_{\ell}$ of robust configurations with the property that $F\not \subseteq Y_i$, for $0\leq i \leq\ell$, as follows: Let $Y_0=Y$. Suppose that we already constructed $Y_0,\ldots, Y_i$. If $F\subseteq A_{i+1}\times Y_i$, then we stop the process and set $\ell=i$. Otherwise, by statement $(b)$ of Lemma \ref{lem:segrobust}, there exists a robust configuration $\tilde{Y}$ such that $A_{i+1}\times Y_i \subseteq \tilde{Y}$ and $F\not\subseteq \tilde{Y}$. Set $Y_{i+1}=\tilde{Y}$. A simple induction shows that for every $1\leq i \leq \ell$
\begin{align*}
    A_1\times \ldots \times A_i\times Y\subseteq Y_i.
\end{align*}
Since $F\subseteq B\times Y=A_1\times \ldots \times A_d \times Y$, the process terminates before the $d$-th step of the construction, i.e., $\ell<d$. This implies that $F\subseteq A_{\ell+1}\times Y_\ell$ and $F\not\subseteq Y_\ell$ and by statement $(a)$ of Lemma \ref{lem:segrobust}, we have that $F$ is P-Ramsey.
\end{proof}

A corollary of Theorem \ref{thm:robust} is that bricks are P-Ramsey. In fact, we prove the following slightly stronger statement that in particular implies Theorem \ref{thm:bricks}.

\begin{corollary}\label{cor:subbricks}
Let $B$ be a brick and $F\subseteq B$ be a subconfiguration with $|F|>1$. Then $F$ is P-Ramsey.
\end{corollary}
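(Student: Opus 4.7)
The plan is to deduce Corollary~\ref{cor:subbricks} immediately from Theorem~\ref{thm:robust} by choosing the robust configuration $Y$ as trivially as possible: a single point $Y_0 = \{y_0\}$ in $\RR^\infty$. First I would verify that $Y_0$ is indeed robust in the sense of Definition~\ref{def:robust}. Since any finite $C \subseteq Y_0$ has at most one point, and any $r$-coloring of the single point $y_0$ yields a monochromatic copy of a one-point configuration tautologically, we get $Y_0 \to (C)_r$ for every $r \geq 1$ and every finite $C \subseteq Y_0$. Thus $Y_0$ is a (countable) robust configuration.

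Next I would observe that the product $B \times Y_0$ is congruent to $B$ itself, because the second coordinate is constant and so $\|(b,y_0) - (b',y_0)\| = \|b - b'\|$ for all $b,b' \in B$. Consequently the hypothesis $F \subseteq B$ of Corollary~\ref{cor:subbricks} gives $F \subseteq B \times Y_0$ in the sense defined in the introduction. Moreover, since $|Y_0| = 1$ and $|F| > 1$, no copy of $Y_0$ can contain $F$, so $F \not\subseteq Y_0$.

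With these two observations the hypotheses of Theorem~\ref{thm:robust} are satisfied for the brick $B$, the robust configuration $Y_0$, and the subconfiguration $F$. Applying Theorem~\ref{thm:robust} directly yields that $F$ is P-Ramsey, completing the proof. The main obstacle here is not this deduction, which is essentially a one-line application, but rather Theorem~\ref{thm:robust} itself (and, underlying it, Lemma~\ref{lem:segrobust} and the transcendence argument in the construction of $Y_A$); the role of the corollary is just to package the $Y$-free case by taking $Y$ to be a point. Theorem~\ref{thm:bricks} then follows as the special case $F = B$ (provided $|B| > 1$, which holds for any brick of positive dimension).
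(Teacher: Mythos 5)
Your proof is correct, but it takes a genuinely different (and shorter) route than the paper. The paper writes $B=A_1\times\cdots\times A_d$ with $a_1\geq\cdots\geq a_d$, takes the robust configuration to be $Y_{A_d}$ (the shift-graph construction of Lemma~\ref{lem:segments} on the \emph{shortest} side), and applies Theorem~\ref{thm:robust} to the $(d-1)$-dimensional brick $A_1\times\cdots\times A_{d-1}$; this forces a separate treatment of the degenerate case $F\subseteq Y_{A_d}$, which the authors dispose of by a diameter argument showing that $F$ must then be the segment $A_d$, handled directly by properties $(ii)$ and $(iii)$ of Lemma~\ref{lem:segments}. You instead take $Y$ to be a singleton, which is vacuously robust under Definition~\ref{def:robust} (the only finite subconfigurations are points), note that $B\times\{y_0\}\cong B$ and that $F\not\subseteq\{y_0\}$ is automatic from $|F|>1$, and invoke Theorem~\ref{thm:robust} with no case analysis. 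This is legitimate: nothing in the proofs of Theorem~\ref{thm:robust} or Lemma~\ref{lem:segrobust} requires $Y$ to be infinite or nondegenerate --- the field $L=\QQ(a,D_Y,D_F)$ remains countable, Claim~\ref{cl:projection} still applies with $\|p''-q''\|=0\in L$, and the weighted independent-set argument is unaffected --- so your deduction buys a cleaner, case-free proof, at the mild cost of leaning on the observation that the robustness machinery degenerates gracefully. The paper's version, by contrast, never needs to contemplate a one-point ``robust configuration'' and makes explicit which concrete witness set (namely $Y_{A_{\ell+1}}\times Y_\ell$ for the relevant $\ell$) certifies P-Ramseyness.
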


\begin{proof}
Suppose that $B$ is $d$-dimensional brick and write $B=A_1\times \ldots \times A_d$, where $A_i$ is a segment of length $a_i$ and $a_1\geq \ldots \geq a_d$. Let $\gamma>0$ be an arbitrary real number and let $Y_{A_d}$ be the configuration obtained by Lemma \ref{lem:segments} with parameters $\gamma$ and $a_d$. Suppose that $F\subseteq Y_{A_d}$. By the minimality of the segment $A_d$, we have that the minimum distance between two points in $F$ is at least $a_d$. Moreover, by property $(i)$ of Lemma \ref{lem:segments}, the diameter of $Y_{A_d}$ is exactly $a_d$. Hence, any two points of $F$ has distance $a_d$. If $|F|\geq 3$, then $Y_{A_d}$ contains an equilateral triangle of sides $a_d$. This contradicts property $(iv)$ of Lemma \ref{lem:segments}. Thus, $F$ is a copy of the segment $A_d$ and in this case $F$ is P-Ramsey by property $(ii)$ and $(iii)$ of Lemma \ref{lem:segments}.

Now suppose that $F\not \subseteq Y_{A_d}$. Since $A_d\subseteq Y_{A_d}$, then $F\subseteq A_1\times \ldots \times A_d\subseteq A_1\times \ldots \times A_{d-1}\times Y_{A_d}$. Therefore, $F$ satisfies the hypothesis of Theorem \ref{thm:robust} and we obtain that $F$ is P-Ramsey.
\end{proof}

\section{Simplices are P-Ramsey}\label{sec:simplex}

In this section we prove Theorem \ref{thm:simplices}. The proof follows the ideas from \cites{FR90, MR95}. First, we will introduce the terminology and auxiliary results from those papers. The main idea will be to prove that any simplex $S$ can be embedded in a product $B\times Y$, where $B$ is a brick and $Y$ is a robust configuration.

To address the robust configuration consider the following definition: Let $\{e_i\}_{i\geq 1}$ be the standard basis of $\RR^{\infty}$. Given an integer $k$, a vector $c=(c_1,\ldots,c_k) \in \RR^k$ and a $k$-tuple $J=\{j_1,\ldots,j_k\}\in \NN^{(k)}$ with $j_1<\ldots<j_k$, we define the point $\spread(c,J) \in \RR^{\infty}$ as
\begin{align*}
    \spread(c,J)=\sum_{\ell=1}^k c_\ell e_{j_\ell}.
\end{align*}
Given a subset of integers $A\subseteq \NN$, one can then define the configuration $\Spread(c,A)$ as
\begin{align*}
    \Spread(c,A)=\{\spread(c,J):\: J\in A^{(k)}\},
\end{align*}
where $A^{(k)}$ is the set of $k$-tuples of $A$.

The reason why spread configurations are interesting for us is twofold: One is that those configurations approximate simplices very well. The second is that it fits well in the context of P-Ramseyeness (see Claim \ref{cl:spreadrobust} below). The next result was proven in \cite{MR95}. Given real number $\rho>0$, we denote by $S_{\rho}(\RR^{\infty})$ the sphere of radius $\rho$ in $\RR^{\infty}$. For a linear subspace $Z\subseteq \RR^\infty$, let $S_{\rho}(Z)=S_{\rho}(\RR^\infty)\cap Z$.

\begin{proposition}[\cite{MR95}]\label{prop:spreadapprox}
For every $\delta>0$ and every integer $m$, there exist integer $n$, $k$, a $k$-dimensional vector $c=(c_1,\ldots,c_k)\in \RR^{k}$ with $||c||=\rho$ and an $m$-dimensional subspace $Z\subseteq \RR^{\infty}$ such that the following holds. For every $z\in S_{\rho}(Z)$, there is a point $y\in \Spread(c,[n])$ such that $||z-y||<\delta$.
\end{proposition}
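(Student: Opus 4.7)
The plan is to construct $n$, $k$, the vector $c$, and the subspace $Z$ by combining a block partition of $[n]$ with a discretization of the sphere $S_\rho(Z)$ and a rearrangement argument matching multisets of coordinate values. The implicit convention is that the conclusion ``for every $z\in Z$'' is to be read as ``for every $z\in S_\rho(Z)$'', since the configuration $\Spread(c,[n])$ is bounded.

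First we fix the subspace: take $n$ to be a large multiple of $m+1$ and partition $[n]$ into blocks $B_0,\ldots,B_m$ of equal size $N=n/(m+1)$. Define $v_j=N^{-1/2}\sum_{i\in B_j}e_i$, so that $v_0,\ldots,v_m$ are orthonormal, and set $Z=\{\sum_j\alpha_j v_j:\sum_j\alpha_j=0\}\subseteq\operatorname{span}(v_0,\ldots,v_m)$, an $m$-dimensional subspace of $\RR^\infty$. Every $z\in S_\rho(Z)$ is then block-constant with value $\alpha_j/\sqrt{N}$ on $B_j$, where $\alpha\in\RR^{m+1}$ satisfies $\sum_j\alpha_j=0$ and $\sum_j\alpha_j^2=\rho^2$.

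Next we observe that every element of $\Spread(c,[n])$ is a coordinate permutation of the padded vector $(c_1,\ldots,c_k,0,\ldots,0)\in\RR^n$; hence, by the rearrangement inequality, $\min_{y\in\Spread(c,[n])}||z-y||$ is determined by matching the sorted coordinate sequences of $z$ and of $c$ padded with zeros. We fix a finite $(\delta/2)$-net $\cN$ of the parameter sphere $\{\alpha:\sum_j\alpha_j=0,\sum_j\alpha_j^2=\rho^2\}$ using rationals of a common denominator. By the triangle inequality, it suffices to $(\delta/2)$-approximate each target $z^{(t)}$ associated to a net point $\alpha^{(t)}\in\cN$.

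We then construct $c$ so that its entries include every rational value $\alpha^{(t)}_j/\sqrt{N}$ appearing in the net with sufficient multiplicity, rescaled so that $||c||=\rho$. For each $z^{(t)}$ a specific $k$-tuple $J^{(t)}\in[n]^{(k)}$ is produced by placing the entries of $c$ with value $\alpha^{(t)}_j/\sqrt{N}$ into block $B_j$; the entries of $c$ needed only for other net points are routed into an overflow portion of $[n]$ whose cumulative contribution to $||z^{(t)}-\spread(c,J^{(t)})||$ can be driven below $\delta/2$ by taking $N$ large.

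The main obstacle is precisely this simultaneous approximation step: a single multiset of entries of $c$ (with $||c||=\rho$) must, via different coordinate permutations, yield $\delta/2$-approximations of every net point, despite the different targets having different coordinate multisets. This is the technical heart of \cite{MR95}: by choosing $N$ sufficiently large relative to $|\cN|$ and to the common denominator of the discretization, the error produced by the overflow entries is made uniformly smaller than $\delta/2$, so that a single $c$ with $||c||=\rho$ serves all net points at once.
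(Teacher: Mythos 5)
The paper itself offers no proof of this proposition --- it is imported verbatim from \cite{MR95} --- so there is nothing in-text to compare your argument with; I can only assess it on its own terms. Your reading of the conclusion as ``for every $z\in S_\rho(Z)$'' is certainly the intended one. However, the construction you propose cannot work, and the obstruction is exposed by the very rearrangement bound you invoke. Since every $y\in\Spread(c,[n])$ is a coordinate permutation of the padded vector $\tilde c=(c_1,\ldots,c_k,0,\ldots,0)$, the quantity $\min_{y}\|z-y\|$ is bounded below by the optimal-matching distance between the coordinate multiset of $z$ and that of $\tilde c$; this matching distance is a metric on multisets and so satisfies the triangle inequality. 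For your block-constant subspace $Z$, two points $z_\alpha,z_{\alpha'}\in S_\rho(Z)$ with different value profiles have coordinate multisets at matching distance bounded below by a constant depending only on $\rho$ and $m$, \emph{independent of $N$}: each aligned pair of blocks contributes $(\alpha_j-\alpha'_{j'})^2$ no matter how large $N$ is. Concretely, for $m=2$, $\alpha=\rho(1/\sqrt2,-1/\sqrt2,0)$ versus $\alpha'=\rho(\sqrt{2/3},-1/\sqrt6,-1/\sqrt6)$ gives matching distance about $\rho/2$. Hence no single multiset $\tilde c$ can be within $\delta$ of both once $\delta<\rho/4$, i.e.\ no choice of $c$ makes $\Spread(c,[n])$ $\delta$-dense in $S_\rho(Z)$ for the coordinate-block subspace $Z$. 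Your own lower bound refutes your choice of $Z$.

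The step that concretely fails is the ``overflow'' claim. If $c$ contains, for each net point $t$, the values $\alpha^{(t)}_j/\sqrt N$ with multiplicity $N$ (as needed to fill the blocks), then before rescaling $\|c\|^2\approx|\cN|\rho^2$; after rescaling to $\|c\|=\rho$ the entries dedicated to net point $t$ carry squared norm only $\rho^2/|\cN|$, and the remaining entries carry squared norm $\rho^2(1-1/|\cN|)$. That leftover mass is a fixed fraction of $\rho^2$ and cannot be diluted by increasing $N$: wherever those entries are routed, they contribute a constant amount to $\|z^{(t)}-\spread(c,J^{(t)})\|^2$. A correct proof cannot first fix $Z$ as a coordinate-block subspace and then match coordinate multisets; the subspace $Z$ must emerge from the construction. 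The route taken in \cite{MR95} is to control the pairwise inner products $\langle\spread(c,J),\spread(c,J')\rangle=\sum_{j_\ell=j'_{\ell'}}c_\ell c_{\ell'}$ of a large family of spread points (all of norm $\rho$) so that their Gram matrix approximates that of a fine net of the sphere $S_\rho(Z)$, and only then to identify $Z$; approximate congruence is a statement about pairwise distances, not about coordinate multisets.
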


Since any $d$-dimensional simplex can be embedded in any $d$-dimensional vector space, we obtain the following corollary from Proposition \ref{prop:spreadapprox}.

\begin{corollary}\label{cor:spread}
For $\delta<\rho/2$ and $d$-dimensional simplex $S=\{y_0,\ldots,y_d\}$ of circumradius $\rho(S)=\rho$, there exist integers $n$, $k$, a $k$-dimensional vector $c=(c_1,\ldots,c_k) \in \RR^k$ with $||c||=\rho$ and a $d$-dimensional simplex $S'=\{z_0,\ldots,z_d\}\subseteq \Spread(c,[n])$ such that $||y_i-z_i||<\delta$ for $0\leq i \leq d$.
\end{corollary}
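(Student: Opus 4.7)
\textbf{Proof plan for Corollary \ref{cor:spread}.} The plan is to apply Proposition \ref{prop:spreadapprox} with dimension parameter $m=d$ to obtain an approximating subspace $Z\subseteq \RR^\infty$, then isometrically embed $S$ into $Z$ with its circumcenter placed at the origin (so that every vertex lands on $S_\rho(Z)$), and finally approximate each vertex by a point of $\Spread(c,[n])$.

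First I would choose an auxiliary parameter $\delta'\in(0,\delta]$ small enough that any $\delta'$-perturbation of a $(d+1)$-tuple congruent to the vertices of $S$ remains affinely independent; such $\delta'$ exists because affine independence is an open condition on $(d+1)$-tuples of points. Then I apply Proposition \ref{prop:spreadapprox} with this $\delta'$ and $m=d$, obtaining integers $n,k$, a vector $c\in \RR^k$ with $||c||=\rho$, and a $d$-dimensional linear subspace $Z\subseteq \RR^\infty$ every point of which is within $\delta'$ of some element of $\Spread(c,[n])$. Since $Z$ is a $d$-dimensional Euclidean space and $S$ (once translated so that its circumcenter is at the origin) is a $d$-dimensional configuration whose vertices lie on the sphere of radius $\rho$, there is an isometric copy of $S$ sitting inside $Z$ with all vertices on $S_\rho(Z)$. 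Because the statement lets us pick any isometric embedding of $S$ into $\RR^{\infty}$, we may assume $S$ is this copy. Using the approximation property of $Z$, I then pick $z_i\in \Spread(c,[n])$ with $||y_i-z_i||<\delta'\le \delta$ for each $0\le i\le d$.

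The main obstacle is merely to verify that $\{z_0,\ldots,z_d\}$ is a genuine $d$-dimensional simplex rather than a degenerate lower-dimensional configuration. This is handled by the continuity argument baked into the choice of $\delta'$ above: since the $y_i$ are affinely independent and each $z_i$ lies within $\delta'$ of $y_i$, the $z_i$ remain affinely independent. The stated hypothesis $\delta<\rho/2$ plays a mild supporting role (it is comfortably compatible with shrinking $\delta'$ further as needed, and ensures for instance that the $z_i$, which all satisfy $||z_i||=\rho$, do not accumulate near a single point) and should be viewed as a convenient normalization rather than the essential constraint.
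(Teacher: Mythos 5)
Your proposal is correct and follows essentially the same route as the paper, which derives the corollary from Proposition \ref{prop:spreadapprox} in a single sentence by embedding $S$ (circumcenter at the origin, so its vertices lie on $S_\rho(Z)$) into the $d$-dimensional approximating subspace. Your added perturbation argument guaranteeing that the $z_i$ remain affinely independent is a detail the paper leaves implicit, and your reading that the approximation must take place on the sphere $S_\rho(Z)$ (since all points of $\Spread(c,[n])$ have norm $\rho$) is the intended one.
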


The second reason is that Spread configurations are robust.

\begin{claim}\label{cl:spreadrobust}
$\Spread(c,\NN)$ is robust.
\end{claim}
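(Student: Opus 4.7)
The plan is to show that $\Spread(c, \NN)$ is robust by a direct application of the infinite Ramsey theorem for $k$-uniform hypergraphs on $\NN$. The key structural observation is that any order-preserving injection $\phi: \NN \to \NN$ induces an isometry of $\Spread(c, \NN)$: the definition $\spread(c, J) = \sum_{\ell = 1}^{k} c_\ell e_{j_\ell}$ assigns the coefficient $c_\ell$ to the $\ell$-th smallest index of $J$, and this assignment is preserved under order-preserving reindexing. Consequently, for $J, J' \in \NN^{(k)}$ the squared distance $\|\spread(c,J) - \spread(c,J')\|^2$ depends only on the ``intersection pattern'' between $J$ and $J'$, which is preserved by $\phi$.

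Given a finite subconfiguration $C \subseteq \Spread(c, \NN)$, I would first fix $n_0$ large enough that $C \subseteq \Spread(c, [n_0])$, so that $C$ corresponds to a finite family $\cF \subseteq [n_0]^{(k)}$ via the map $J \mapsto \spread(c,J)$. An arbitrary $r$-coloring of $\Spread(c, \NN)$ pulls back to an $r$-coloring of $\NN^{(k)}$. By the infinite Ramsey theorem \cite{R29}, there exists an infinite $W \subseteq \NN$ with $W^{(k)}$ monochromatic. Letting $W_0$ be the set of the first $n_0$ elements of $W$ and $\phi: [n_0] \to W_0$ the order-preserving bijection, the preceding observation yields an isometry carrying $\Spread(c, [n_0])$ into $\Spread(c, W_0)$. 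The image $C'$ of $C$ is then a congruent copy of $C$, and every point of $C'$ is indexed by an element of $W_0^{(k)} \subseteq W^{(k)}$, so $C'$ is monochromatic.

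This argument is largely routine once one has Ramsey's theorem in hand; the only point that really requires verification is the equivariance of the $\spread$ map under order-preserving self-embeddings of $\NN$, and this is immediate from the defining formula. I therefore do not expect any genuine obstacle beyond setting up the correspondence carefully.
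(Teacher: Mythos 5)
Your proof is correct and follows essentially the same route as the paper: both pull the coloring back to $\NN^{(k)}$ via $\spread(c,J)\mapsto J$, apply Ramsey's theorem, and use the fact that an order-preserving bijection onto the monochromatic set induces a congruent copy of the given finite configuration. The only difference is cosmetic (you invoke the infinite Ramsey theorem and truncate, and you spell out the equivariance of $\spread$ under order-preserving maps, which the paper leaves implicit in the assertion $X\subseteq \Spread(c,A)$).
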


\begin{proof}
Let $X\subseteq \Spread(c,\NN)$ be a finite configuration. Then there exist $N$ and $\cJ=\{J_1,\ldots,J_t\} \subseteq [N]^{(k)}$ such that $X=\{\spread(c,J): J\in \cJ\}$. Note that there exists a bijective map $\phi$ from $\Spread(c,\NN)$ to $\NN^{(k)}$ given by $\phi(\spread(c,J))=J$. Thus, for any finite coloring of $\Spread(c, \NN)$ there is a corresponding coloring of $\NN^{(k)}$. By Ramsey's theorem, there exist set $A\subseteq \NN$ of size $N$ such that $A^{(k)}$ is monochromatic. Therefore, the configuration $\Spread(c,A)$ is monochromatic. The result follows now since $X\subseteq \Spread(c,A)$. 
\end{proof}

Another important result for our proof is the next characterization of configurations of points in an Euclidean space. Let $M=(m_{ij})_{0\leq i,j\leq d}$ be a symmetric real matrix with zero entries on the main diagonal. We say that the matrix $M$ is of \textit{negative type} if
\begin{align}\label{ceq:eq2}
    \sum_{0\leq i<j \leq d}m_{ij}\lambda_i \lambda_j\leq 0
\end{align}
holds for all choices of $\lambda_0,\ldots,\lambda_d$ with $\lambda_0+\ldots+\lambda_d=0$ and $\lambda_0^2+\ldots+\lambda_d^2=1.$

\begin{theorem}[\cite{S38}]\label{thm:realization}
A finite configuration $X=\{x_0,\ldots,x_d\}$ with distances $d_{ij}=||x_i-x_j||$ can be embedded in the Euclidean space $\RR^d$ if and only if the matrix $M=(m_{ij})_{0\leq i, j\leq d}$ given by $m_{ij}=d_{ij}^2$ is of negative type. Moreover, $X$ is a $d$-dimensional simplex if and only if the inequality in (\ref{ceq:eq2}) is strict for all choices of $\lambda_0,\ldots,\lambda_d$.
\end{theorem}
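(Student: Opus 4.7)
The plan is to translate Euclidean embeddability into the positive semidefiniteness of a Gram matrix and then relate that to the negative-type condition by a linear change of test variables. First I would fix $x_0$ as an origin and set $v_i = x_i - x_0$ for $1 \leq i \leq d$. The polarization identity
\[
\langle v_i, v_j\rangle = \tfrac{1}{2}\bigl(d_{0i}^2 + d_{0j}^2 - d_{ij}^2\bigr)
\]
shows that if $X$ embeds in a Euclidean space the matrix $G = (g_{ij})_{1 \leq i,j \leq d}$ with $g_{ij}$ equal to the right-hand side above is exactly the Gram matrix of $(v_1,\ldots,v_d)$ and is therefore positive semidefinite. Conversely, any positive semidefinite $d \times d$ matrix $G$ admits a factorization $G = V^\top V$ with $V \in \RR^{d \times d}$; declaring the columns of $V$ to be $v_1,\dots,v_d$ and $v_0 = 0$ realizes $X$ inside $\RR^d$ with the prescribed pairwise distances. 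So the first part of the theorem reduces to the equivalence: the matrix $M$ is of negative type if and only if $G$ is positive semidefinite.

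To prove this equivalence, I would use the substitution $\lambda_0 = -(a_1+\cdots+a_d)$, $\lambda_i = a_i$ for $1 \leq i \leq d$, which is a bijection between $(a_1, \ldots, a_d) \in \RR^d$ and tuples $(\lambda_0, \ldots, \lambda_d)$ with $\lambda_0+\cdots+\lambda_d = 0$. A direct expansion, using $d_{ii} = 0$ and $\sum_i \lambda_i = 0$ to collapse the cross terms, yields the algebraic identity
\[
\sum_{i,j=1}^d g_{ij}\, a_i a_j \;=\; -\sum_{0\leq i<j\leq d} m_{ij}\,\lambda_i\lambda_j,
\]
from which both directions follow at once. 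Note that the extra normalization $\sum \lambda_i^2 = 1$ appearing in the definition of negative type is a harmless rescaling and plays no role: the inequality $\sum_{i<j} m_{ij}\lambda_i\lambda_j \leq 0$ for all tuples summing to $0$ is equivalent to the same inequality for those additionally of unit Euclidean length.

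For the simplex addendum, the configuration $X$ is a $d$-dimensional simplex precisely when $v_1, \ldots, v_d$ are linearly independent, i.e., when $G$ is strictly positive definite. Through the same displayed identity this is equivalent to strict inequality in the negative-type condition for every nonzero $(\lambda_0, \ldots, \lambda_d)$ with $\sum_i \lambda_i = 0$. There is no genuine obstacle in the argument; the only step that needs care is verifying the displayed identity cleanly, and the rest is the standard correspondence between Euclidean configurations and their Gram matrices. The one subtle point worth being explicit about is that in the converse direction the factorization $G = V^\top V$ can always be carried out with $V$ a $d\times d$ matrix, so that the resulting embedding genuinely lands in $\RR^d$ (rather than some higher-dimensional space), matching the statement of the theorem.
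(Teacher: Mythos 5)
The paper does not prove this statement at all: it is quoted as a classical result of Schoenberg \cite{S38}, so there is no internal proof to compare against. Your argument is the standard (and correct) Gram-matrix proof of Schoenberg's criterion. The polarization identity, the factorization $G=V^\top V$ with $V$ of size $d\times d$ (which is what places the embedding in $\RR^d$ rather than a larger space), and the key identity
\[
\sum_{i,j=1}^d g_{ij}\,a_i a_j \;=\; -\sum_{0\leq i<j\leq d} m_{ij}\,\lambda_i\lambda_j
\qquad\text{under }\lambda_0=-(a_1+\cdots+a_d),\ \lambda_i=a_i,
\]
all check out: expanding $\tfrac12\sum_{i,j}(d_{0i}^2+d_{0j}^2-d_{ij}^2)a_ia_j$ and using $\sum_i\lambda_i=0$ to rewrite the first two sums as $-\lambda_0\sum_{i\geq 1}m_{0i}\lambda_i$ recovers exactly the missing $i=0$ terms of the right-hand side. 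Your observations that the normalization $\sum_i\lambda_i^2=1$ is a harmless rescaling, and that affine independence of $X$ corresponds to positive definiteness of $G$ and hence to strict inequality, complete both halves of the statement. The proof is complete and correct; it simply supplies a proof the paper chose to import by citation.
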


Dekster and Wilker provided in \cite{DW87} a characterization of when the edge lengths of a simplex are realizable in the Euclidean space. As a consequence of Theorem \ref{thm:realization}, we show a weaker version of this result that is enough for the purpose of the paper.

\begin{corollary}\label{cor:almostsimplex}
Let $d$ be an integer and $\beta, \epsilon >0$ be real numbers such that $\epsilon<\frac{\beta}{64d^2}$. For any symmetric matrix of distances $D=\{d_{ij}\}_{0\leq i,j\leq d}$ satisfying
\begin{align}\label{ceq:approximate}
    \beta-\epsilon\leq d_{ij} \leq \beta +\epsilon,
\end{align}
there exists a $d$-dimensional simplex $S=\{x_0,\ldots,x_d\}$ such that $||x_i-x_j||=d_{ij}$ for every $0\leq i<j \leq d$.
\end{corollary}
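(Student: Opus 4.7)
The plan is to verify the hypothesis of Theorem \ref{thm:realization} directly: one checks that the symmetric matrix $M=(d_{ij}^2)_{0\le i,j\le d}$ is of strict negative type, which then yields a realization of the distances as a $d$-dimensional simplex.

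First, I will analyze the regular case as a baseline. For any $\lambda_0,\ldots,\lambda_d$ with $\sum_i\lambda_i=0$ and $\sum_i\lambda_i^2=1$, the identity
\[
\sum_{0\le i<j\le d}\beta^2\lambda_i\lambda_j=\frac{\beta^2}{2}\left[\Bigl(\sum_i\lambda_i\Bigr)^{\!2}-\sum_i\lambda_i^2\right]=-\frac{\beta^2}{2}
\]
shows that the regular simplex of side $\beta$ already gives a strictly negative value.

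Next I write $d_{ij}=\beta+\eta_{ij}$ with $|\eta_{ij}|\le\epsilon$, so that $d_{ij}^2=\beta^2+2\beta\eta_{ij}+\eta_{ij}^2$, and split
\[
Q(\lambda):=\sum_{0\le i<j\le d}d_{ij}^2\lambda_i\lambda_j=-\frac{\beta^2}{2}+\sum_{0\le i<j\le d}(2\beta\eta_{ij}+\eta_{ij}^2)\lambda_i\lambda_j.
\]
The error term will be controlled via the AM--GM bound $|\lambda_i\lambda_j|\le(\lambda_i^2+\lambda_j^2)/2$, which yields $\sum_{i<j}|\lambda_i\lambda_j|\le d/2$ since each $\lambda_i^2$ occurs in exactly $d$ pairs and $\sum_i\lambda_i^2=1$. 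Combining, the perturbation is bounded in absolute value by $(2\beta\epsilon+\epsilon^2)\cdot d/2$. Using the hypothesis $\epsilon<\beta/(64d^2)$ (so in particular $\epsilon<\beta$ and $\epsilon^2<\beta\epsilon$) this is strictly less than $3\beta^2 d/(64 d^2\cdot 2)<\beta^2/2$. Hence $Q(\lambda)<0$ for all admissible $\lambda$, so $M$ is strictly of negative type, and Theorem \ref{thm:realization} produces the desired $d$-dimensional simplex $S=\{x_0,\ldots,x_d\}$ with $\|x_i-x_j\|=d_{ij}$.

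There is no real obstacle here: the bound $\epsilon<\beta/(64d^2)$ in the statement is much stronger than what the calculation actually needs, so the only thing to be careful about is verifying the strict inequality (which Theorem \ref{thm:realization} requires in order to conclude that the configuration spans $\RR^d$ rather than a proper subspace). The main idea is simply that a small multiplicative perturbation of a regular simplex remains a simplex, proved via the classical Schoenberg-type criterion.
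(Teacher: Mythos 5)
Your proof is correct and follows essentially the same route as the paper: both verify that $M=(d_{ij}^2)$ is strictly of negative type by perturbing off the regular case via the identity $\sum_{i<j}\lambda_i\lambda_j=-\tfrac12$ and then invoke Theorem~\ref{thm:realization}. The only cosmetic difference is that you bound the error term by $(2\beta\epsilon+\epsilon^2)\,d/2$ using $\sum_{i<j}|\lambda_i\lambda_j|\le d/2$, while the paper uses the cruder bound $(d+1)^2(2\epsilon\beta+\epsilon^2)$; both comfortably fit under the hypothesis $\epsilon<\beta/(64d^2)$.
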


\begin{proof}
Let $M=(m_{ij})_{0\leq i,j\leq d}$ be the symmetric matrix with zero entries in the main diagonal given by $m_{ij}=d_{ij}^2$ for $i\neq j$. For real numbers $\lambda_0,\ldots, \lambda_d$ satisfying $\lambda_0+\ldots+\lambda_d=0$ and $\lambda_0^2+\ldots+\lambda_d^2=1$ we have that
\begin{align*}
    0=\left(\sum_{i=0}^d \lambda_i\right)^2=1+2\sum_{0\leq i<j\leq d}\lambda_i\lambda_j.
\end{align*}
Hence, 
\begin{align}\label{ceq:eq3}
    \sum_{0\leq i<j\leq d}\lambda_i \lambda_j=-\frac{1}{2}
\end{align}
Thus, by (\ref{ceq:approximate}) and (\ref{ceq:eq3}) we have
\begin{align*}
\left|\left|\sum_{0\leq i<j\leq d}m_{ij}\lambda_i\lambda_j + \frac{\beta^2}{2}\right|\right|=\left|\left|\sum_{0\leq i<j\leq d}(m_{ij}-\beta^2)\lambda_i\lambda_j\right|\right|&\leq \sum_{0\leq i <j \leq d}||m_{ij}-\beta^2||\\
&\leq (d+1)^2(2\epsilon\beta+\epsilon^2)<\frac{\beta^2}{4}.
\end{align*}
This implies that $\sum_{0\leq i<j\leq d}m_{ij}\lambda_i\lambda_j < -\frac{\beta^2}{4}$ and $M$ is strictly of negative type. Therefore, by Theorem \ref{thm:realization} there exists a simplex $S=\{x_0,\ldots,x_d\}$ with $||x_i-x_j||=d_{ij}$ for $0\leq i <j \leq d$.
\end{proof}

Finally, the last auxiliary result shows that any almost regular simplex can be embedded in a brick.

\begin{theorem}[\cites{FR90, MR95}]\label{thm:embedbrick}
For every $\beta, d>0$, there exists a real number $\eta:=\eta(\beta,d)$ such that the following holds. For any simplex $S=\{w_0,\ldots,w_d\}$ satisfying
\begin{align*}
    \beta-\eta\leq ||w_i-w_j||\leq \beta +\eta
\end{align*}
for $0\leq i<j \leq d$, there exists a $\binom{d+1}{2}$-dimensional brick $B$ with $S\subseteq B$.
\end{theorem}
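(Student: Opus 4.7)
The strategy is to exhibit one explicit family of embeddings of a $d$-simplex into a $\binom{d+1}{2}$-dimensional brick parametrised by the brick's side lengths, and then to tune those side lengths by a continuity argument so that the prescribed distances $d_{ij} := \|w_i - w_j\|$ are realised exactly.

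First, I would index the coordinate axes of $\RR^{\binom{d+1}{2}}$ by unordered pairs $\{i,j\} \in \binom{[d+1]}{2}$ (writing $[d+1] = \{0,1,\ldots,d\}$), assign a positive side length $a_{ij} > 0$ to the axis $e_{\{i,j\}}$, and consider the candidate embedding
\begin{align*}
\phi(w_i) := \sum_{j \neq i} a_{ij}\, e_{\{i,j\}}, \qquad i = 0, 1, \ldots, d.
\end{align*}
Each $\phi(w_i)$ is a vertex of the brick with side lengths $\{a_{ij}\}$. Because the coefficient of $e_{\{i,j\}}$ in $\phi(w_i) - \phi(w_j)$ is $a_{ij} - a_{ji} = 0$, while for $\ell \notin \{i,j\}$ the basis vectors $e_{\{i,\ell\}}$ and $e_{\{j,\ell\}}$ are distinct and mutually orthogonal, a short computation yields
\begin{align*}
\|\phi(w_i) - \phi(w_j)\|^2 = \sum_{\ell \neq i,j} \bigl(a_{i\ell}^2 + a_{j\ell}^2\bigr).
\end{align*}

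Next, writing $b_{ij} := a_{ij}^2$ and $B_i := \sum_{k \neq i} b_{ik}$, the embedding $\phi$ reproduces the prescribed distances precisely when
\begin{align*}
B_i + B_j - 2 b_{ij} = d_{ij}^2 \qquad (0 \leq i < j \leq d),
\end{align*}
a linear system of $\binom{d+1}{2}$ equations in $\binom{d+1}{2}$ unknowns. For $d \geq 3$ one solves it explicitly: summing over $j \neq i$ gives $(d-2)B_i + T = \sum_{j \neq i} d_{ij}^2$ with $T := \sum_i B_i$, and summing again yields $T = \tfrac{2}{2d-1}\sum_{i<j} d_{ij}^2$, so that each $B_i$ (and hence each $b_{ij}$) is recovered as an explicit linear function of $(d_{ij}^2)$. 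The cases $d=1,2$ are handled directly (for $d=1$ take $B=S$, and for $d=2$ set $b_{ij}=(d_{ik}^2+d_{jk}^2-d_{ij}^2)/2$ with $\{i,j,k\}=\{0,1,2\}$). At the regular point $d_{ij} \equiv \beta$ substitution gives $b_{ij} = \beta^2/(2(d-1))$, which is strictly positive.

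Finally, since the map $(d_{ij}^2) \mapsto (b_{ij})$ is continuous (indeed linear), there exists $\eta = \eta(\beta, d) > 0$ such that whenever $|d_{ij} - \beta| \leq \eta$ for every pair $\{i,j\}$, the resulting $b_{ij}$ remain strictly positive; taking $a_{ij} := \sqrt{b_{ij}}$ then produces a $\binom{d+1}{2}$-dimensional brick $B$ whose vertex subset $\{\phi(w_0), \ldots, \phi(w_d)\}$ is congruent to $S$. The only mildly delicate step is verifying that the above linear system is nondegenerate and yields strictly positive solutions in the regular case; once this algebra is recorded, the perturbation argument is routine.
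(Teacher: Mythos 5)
The paper does not actually prove Theorem~\ref{thm:embedbrick}; it is quoted from \cite{FR90} and \cite{MR95}, so there is no internal proof to compare against. Judged on its own terms, your construction is the natural one and your distance computation $\|\phi(w_i)-\phi(w_j)\|^2=\sum_{\ell\neq i,j}(a_{i\ell}^2+a_{j\ell}^2)=B_i+B_j-2b_{ij}$ is correct, but the argument has a genuine gap at $d=3$. For four points the linear form on the left of the equation for the pair $\{i,j\}$ is $\sum_{f:\,|f\cap\{i,j\}|=1}b_f$, and for complementary pairs these forms coincide: the equations for $\{0,1\}$ and for $\{2,3\}$ both read $b_{02}+b_{03}+b_{12}+b_{13}=\cdot$. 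Hence the $6\times 6$ system (whose matrix is the adjacency matrix of the triangular graph $T(4)$, with eigenvalue $0$ of multiplicity $3$) is singular, and it is solvable only when $d_{01}=d_{23}$, $d_{02}=d_{13}$, $d_{03}=d_{12}$. An almost regular tetrahedron need not satisfy these identities (take $d_{01}=\beta+\eta/2$, $d_{23}=\beta-\eta/2$ and all other distances $\beta$; this is realizable by Corollary~\ref{cor:almostsimplex}), so your embedding cannot produce a $6$-dimensional brick containing it, and the case $d=3$ of the theorem is not established by your argument.

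The arithmetic in your elimination is also off in a way that would have flagged this: summing the equation $B_i+B_j-2b_{ij}=d_{ij}^2$ over the $d$ indices $j\neq i$ gives $(d-3)B_i+T=\sum_{j\neq i}d_{ij}^2$, not $(d-2)B_i+T$, and then $T=\tfrac{1}{d-1}\sum_{i<j}d_{ij}^2$, not $\tfrac{2}{2d-1}\sum_{i<j}d_{ij}^2$ (check against the regular case, where $b_{ij}=\beta^2/(2(d-1))$, which you do state correctly). The vanishing coefficient $d-3$ is precisely the $d=3$ degeneracy. For $d\geq 4$ the adjacency matrix of $T(d+1)$ has eigenvalues $2(d-1)$, $d-3$, $-2$, all nonzero, so there your plan (explicit inverse, positivity at the regular point, continuity) does go through, and $d=1,2$ are fine as you handle them. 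To repair $d=3$ one needs a different assignment of brick vertices to the four points of $S$ (a different choice of which coordinates each $\phi(w_i)$ uses), or one must settle for a brick of larger dimension, which would suffice for the application via Theorem~\ref{thm:robust} but does not prove the statement with $\binom{d+1}{2}$ as written.
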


We are now ready to prove Theorem \ref{thm:simplices}.

\begin{proof}[Proof of Theorem \ref{thm:simplices}]
To prove that a simplex is P-Ramsey we will apply again Theorem \ref{thm:robust}, now combined with ideas from \cites{FR90, MR95}. We find convenient to divide the proof in the next four steps:

\vspace{0.2cm}
\noindent \textbf{Step 1:} For a simplex $S=\{x_0,\ldots,x_d\}$ with circumradius $\rho(S)=\rho$, we will find a simplex $S_1=\{y_0,\ldots,y_d\}$ and a small positive real number $\beta$ such that
\begin{align*}
    ||y_i-y_j||^2=||x_i-x_j||^2-\beta
\end{align*}
for all $0\leq i\neq j \leq d$. This implies that the circumradius $\rho(S_1)=\rho'<\rho$.
\vspace{0.2cm}


Let $M=(m_{ij})_{0\leq i,j \leq d}$ be the matrix given by $m_{ij}=||x_i-x_j||^2$. Since $S$ is a simplex, by Theorem \ref{thm:realization} there exists $\gamma>0$ such that 
\begin{align*}
    \sum_{0\leq i<j \leq d}m_{ij}\lambda_i\lambda_j \leq -\gamma
\end{align*}
for all choices of $\lambda_0,\ldots,\lambda_d$ with $\lambda_0+\ldots+\lambda_d=0$ and $\lambda_0^2+\ldots+\lambda_d^2=1$. Set $\beta=\frac{\gamma}{8d^2}$ and let $M'=(m'_{ij})_{0\leq i,j\leq d}$ be the matrix defined by $m'_{ij}=m_{ij}-\beta$ for $i\neq j$ and zero entries in the main diagonal. Since $\beta(d+1)^2\leq 4\beta d^2 <\gamma/2$, then
\begin{align*}
    \sum_{0\leq i<j\leq d}m'_{ij}\lambda_i\lambda_j \leq -\beta\sum_{0\leq i<j\leq d}\lambda_i\lambda_j -\gamma\leq \beta(d+1)^2-\gamma<-\frac{\gamma}{2}<0.
\end{align*}
Consequently, $M'$ is strictly negative, which implies that there exists simplex $S_!=\{y_0,\ldots,y_d\}$ such that
\begin{align}\label{ceq:eq4}
    ||y_i-y_j||^2=m'_{ij}=||x_i-x_j||^2-\beta
\end{align}
for $0\leq i<j \leq d$.

\vspace{0.2cm}
\noindent \textbf{Step 2:} For $\delta \ll \beta$, we find $k$-dimensional vector $c=(c_1,\ldots,c_k)\in \RR^k$ and $S_2=\{z_0,\ldots,z_d\} \subseteq \Spread(c,[n])$ with $||z_i-y_i||<\delta$ for $0\leq i \leq d$. Moreover, 
\begin{align*}
||z_i-z_j||^2=||x_i-x_j||^2-\beta\pm \epsilon
\end{align*}
where $\epsilon:=\epsilon(\beta,d)\rightarrow 0$ as $\delta\rightarrow 0$.
\vspace{0.2cm}

Let $\eta:=\eta(\beta, d)>0$ be the positive real number given by Theorem \ref{thm:embedbrick}, let $\epsilon<\min\{\beta/64d^2, \eta\}$ and let $\delta:=\delta(\eta,\rho)$ be sufficiently small. By Corollary \ref{cor:spread}, there exist integers $n$, $k$, a $k$-dimensional vector $c=(c_1,\ldots,c_k)\in \RR^k$ with $||c||=\rho'$ and a simplex $S_2=\{z_0,\ldots,z_d\}\subseteq \Spread(c,[n])$ such that $||y_i-z_i||<\delta$ for $0\leq i \leq d$. Thus, a triangle inequality gives us that
\begin{align}\label{ceq:anon}
||y_i-y_j||-2\delta<||z_i-z_j||<||y_i-y_j||+2\delta.
\end{align}
Hence, by combining (\ref{ceq:eq4}) and (\ref{ceq:anon})
\begin{align*}
    ||x_i-x_j||^2-\beta+4\delta^2-4\delta||y_i-y_j||<||z_i-z_j||^2<||x_i-x_j||^2-\beta+4\delta^2+4\delta||y_i-y_j||.
\end{align*}
Since $||y_i-y_j||<2\rho'$ and $4\delta^2+8\delta \rho'<\epsilon$ for sufficiently small $\delta$, we have that
\begin{align*}
||x_i-x_j||^2-\beta-\epsilon<||z_i-z_j||^2<||x_i-x_j||^2-\beta+\epsilon.
\end{align*}

\vspace{0.2cm}
\noindent \textbf{Step 3:} We find an almost regular simplex $S_3=\{w_0,\ldots,w_d\}$ satisfying
\begin{align*}
    ||w_i-w_j||^2=||x_i-x_j||^2-||z_i-z_j||^2=\beta\pm \epsilon
\end{align*}
for all $0\leq i\neq j\leq d$. Furthermore, there exists a brick $B$ such that $S_3\subseteq B$.
\vspace{0.2cm}

This is an easy consequence of our preliminary results. By our choice of $\epsilon$, Corollary \ref{cor:almostsimplex} guarantees that there exists simplex $S_3=\{w_0,\ldots,w_d\}$ such that
\begin{align*}
    ||w_i-w_j||^2=||x_i-x_j||^2-||z_i-z_j||^2.
\end{align*}
Moreover, by Theorem \ref{thm:embedbrick}, there exists a $\binom{d+1}{2}$-dimensional brick $B$ such that $S_3\subseteq B$.

\vspace{0.2cm}
\noindent \textbf{Step 4:} We construct a simplex $F\cong S$ such that $F\subseteq B\times \Spread(c,[n])$ and $F\not\subseteq \Spread(c,[n])$ and apply Theorem \ref{thm:robust}
\vspace{0.2cm}

Let $F=\{f_0,\ldots,f_d\}$ be the simplex defined by
\begin{align*}
f_i=w_i*z_i,
\end{align*}
where the symbol $*$ stands for the usual concatenation, i.e., if $a=(a_1,\ldots,a_r)$ and $b=(b_1,\ldots,b_s)$, then $a*b=(a_1,\ldots,a_r,b_1,\ldots,b_s)$. Hence,
\begin{align*}
    ||f_i-f_j||^2=||w_i-w_j||^2+||z_i-z_j||^2=||x_i-x_j||^2
\end{align*}
for every $0\leq i,j \leq d$. Thus, the configuration $F$ is a copy of $S$. Furthermore, by the construction of $F$ we have that 
\begin{align*}
    F\subseteq S_3\times S_2\subseteq B\times \Spread(c,\NN),
\end{align*}
where $B$ is a $\binom{d+1}{2}$-dimensional brick and $\Spread(c,\NN)$ is a robust configuration (Claim \ref{cl:spreadrobust}). Since $\rho(\Spread(c,\NN))=\rho'<\rho=\rho(F)$, we obtain that $F\not\subseteq \Spread(c,\NN)$ and by Theorem \ref{thm:robust} the configuration $F\cong S$ is P-Ramsey.
\end{proof}

\section{Concluding remarks}

The list of known Ramsey configurations is quite limited. Apart of simplices and bricks, the most significant result is due to K\v{r}\'{i}\v{z} \cite{K91} who proved that regular polygons are Ramsey. Unfortunately, our method of robust configurations does not apply here. This leaves us with the following question:

\begin{question}
Are regular polygons P-Ramsey?
\end{question}

Differently from Theorem \ref{thm:densitysimplex}, the proof in \cite{K91} does not provide a density result for regular polygons. Another interesting question would be to determine if such result exists.

\begin{question}
    Let $F$ be a regular polygon. For every $\mu>0$, is there a configuration $Y:=Y(F,\mu)$ such that any set $Z\subseteq Y$ of size $|Z|\geq \mu|Y|$ contains a copy of $F$?
\end{question}

Lastly, another direction of research would be to obtain sharp bounds for the real number $\mu$ in the P-Ramsey definition. It is not hard to show that for a configuration $X$ with $k$ points we cannot take $\mu>\frac{k-1}{k}$. However, our proofs of Theorem \ref{thm:simplices} and \ref{thm:bricks} only gives $\mu=\frac{1}{4}$. It would be interesting to close the gap for simplices.

\begin{question}
    Let $S$ be a $d$-dimensional simplex. What is the largest value of $\mu>0$ such that there exists a configuration $Y$ satisfying properties $(i)$ and $(ii)$ of Definition \ref{def:pramsey}?
\end{question}

\bibliography{literature}

@article {FR90,
    AUTHOR = {Frankl, P. and R\"{o}dl, V.},
     TITLE = {A partition property of simplices in {E}uclidean space},
   JOURNAL = {J. Amer. Math. Soc.},
  FJOURNAL = {Journal of the American Mathematical Society},
    VOLUME = {3},
      YEAR = {1990},
    NUMBER = {1},
     PAGES = {1--7},
      ISSN = {0894-0347},
   MRCLASS = {52A37 (05A99)},
  MRNUMBER = {1020148},
MRREVIEWER = {R. L. Graham},
       DOI = {10.1090/S0894-0347-1990-1020148-2},
       URL = {https://doi.org/10.1090/S0894-0347-1990-1020148-2},
}

@article {EGMRSS73,
    AUTHOR = {Erd\H{o}s, P. and Graham, R. L. and Montgomery, P. and Rothschild,
              B. L. and Spencer, J. and Straus, E. G.},
     TITLE = {Euclidean {R}amsey theorems. {I}},
   JOURNAL = {J. Combinatorial Theory Ser. A},
  FJOURNAL = {Journal of Combinatorial Theory. Series A},
    VOLUME = {14},
      YEAR = {1973},
     PAGES = {341--363},
      ISSN = {0097-3165},
   MRCLASS = {05B30},
  MRNUMBER = {316277},
MRREVIEWER = {Vaclav Chv\'{a}tal},
       DOI = {10.1016/0097-3165(73)90011-3},
       URL = {https://doi.org/10.1016/0097-3165(73)90011-3},
}

@article {G83,
    AUTHOR = {Pisier, Gilles},
     TITLE = {Arithmetic characterizations of {S}idon sets},
   JOURNAL = {Bull. Amer. Math. Soc. (N.S.)},
  FJOURNAL = {American Mathematical Society. Bulletin. New Series},
    VOLUME = {8},
      YEAR = {1983},
    NUMBER = {1},
     PAGES = {87--89},
      ISSN = {0273-0979},
   MRCLASS = {43A46 (10G10)},
  MRNUMBER = {682829},
MRREVIEWER = {V. F. Osipov},
       DOI = {10.1090/S0273-0979-1983-15092-9},
       URL = {https://doi.org/10.1090/S0273-0979-1983-15092-9},
}

@incollection {ENR90,
    AUTHOR = {Erd\H{o}s, Paul and Ne\v{s}et\v{r}il, Jaroslav and R\"{o}dl, Vojt\v{e}ch},
     TITLE = {On {P}isier type problems and results (combinatorial
              applications to number theory)},
 BOOKTITLE = {Mathematics of {R}amsey theory},
    SERIES = {Algorithms Combin.},
    VOLUME = {5},
     PAGES = {214--231},
 PUBLISHER = {Springer, Berlin},
      YEAR = {1990},
   MRCLASS = {11B75 (05D10 11B05)},
  MRNUMBER = {1083603},
       DOI = {10.1007/978-3-642-72905-8\_15},
       URL = {https://doi.org/10.1007/978-3-642-72905-8_15},
}

@article {RRS22,
    title={Colouring versus density in integers and Hales-Jewett cubes}, 
      author={Christian Reiher and Vojtěch Rödl and Marcelo Sales},
      year={2023},
      eprint={2311.08556},
      archivePrefix={arXiv},
      primaryClass={math.CO},
    note={submitted.},
}

@article {NRS22,
    AUTHOR = {Ne\v{s}et\v{r}il, Jarolsav and R\"{o}dl, Vojtech and Sales, Marcelo},
     TITLE = {On Pisier type theorems},
    NOTE ={submitted.},
}

@article {R29,
    AUTHOR = {Ramsey, F. P.},
     TITLE = {On a {P}roblem of {F}ormal {L}ogic},
   JOURNAL = {Proc. London Math. Soc. (2)},
  FJOURNAL = {Proceedings of the London Mathematical Society. Second Series},
    VOLUME = {30},
      YEAR = {1929},
    NUMBER = {4},
     PAGES = {264--286},
      ISSN = {0024-6115},
   MRCLASS = {DML},
  MRNUMBER = {1576401},
       DOI = {10.1112/plms/s2-30.1.264},
       URL = {https://doi.org/10.1112/plms/s2-30.1.264},
}

@article {MR95,
    AUTHOR = {Matou\v{s}ek, Ji\v{r}\'{\i} and R\"{o}dl, Vojt\v{e}ch},
     TITLE = {On {R}amsey sets in spheres},
   JOURNAL = {J. Combin. Theory Ser. A},
  FJOURNAL = {Journal of Combinatorial Theory. Series A},
    VOLUME = {70},
      YEAR = {1995},
    NUMBER = {1},
     PAGES = {30--44},
      ISSN = {0097-3165},
   MRCLASS = {05D10},
  MRNUMBER = {1323999},
MRREVIEWER = {Farrokh Saba},
       DOI = {10.1016/0097-3165(95)90078-0},
       URL = {https://doi.org/10.1016/0097-3165(95)90078-0},
}

@article {S38,
    AUTHOR = {Schoenberg, I. J.},
     TITLE = {Metric spaces and positive definite functions},
   JOURNAL = {Trans. Amer. Math. Soc.},
  FJOURNAL = {Transactions of the American Mathematical Society},
    VOLUME = {44},
      YEAR = {1938},
    NUMBER = {3},
     PAGES = {522--536},
      ISSN = {0002-9947},
   MRCLASS = {42A82 (43A35 46B99 54E35)},
  MRNUMBER = {1501980},
       DOI = {10.2307/1989894},
       URL = {https://doi.org/10.2307/1989894},
}

@article {K91,
    AUTHOR = {K\v{r}\'{\i}\v{z}, Igor},
     TITLE = {Permutation groups in {E}uclidean {R}amsey theory},
   JOURNAL = {Proc. Amer. Math. Soc.},
  FJOURNAL = {Proceedings of the American Mathematical Society},
    VOLUME = {112},
      YEAR = {1991},
    NUMBER = {3},
     PAGES = {899--907},
      ISSN = {0002-9939},
   MRCLASS = {05D10 (20B25)},
  MRNUMBER = {1065087},
MRREVIEWER = {N. Hindman},
       DOI = {10.2307/2048715},
       URL = {https://doi.org/10.2307/2048715},
}

@article {FR86,
    AUTHOR = {Frankl, Peter and R\"{o}dl, Vojt\v{e}ch},
     TITLE = {All triangles are {R}amsey},
   JOURNAL = {Trans. Amer. Math. Soc.},
  FJOURNAL = {Transactions of the American Mathematical Society},
    VOLUME = {297},
      YEAR = {1986},
    NUMBER = {2},
     PAGES = {777--779},
      ISSN = {0002-9947},
   MRCLASS = {05A99 (51M99)},
  MRNUMBER = {854099},
       DOI = {10.2307/2000554},
       URL = {https://doi.org/10.2307/2000554},
}

@incollection {K92,
    AUTHOR = {K\v{r}\'{\i}\v{z}, Igor},
     TITLE = {All trapezoids are {R}amsey},
      NOTE = {Topological, algebraical and combinatorial structures.
              Frol\'{\i}k's memorial volume},
   JOURNAL = {Discrete Math.},
  FJOURNAL = {Discrete Mathematics},
    VOLUME = {108},
      YEAR = {1992},
    NUMBER = {1-3},
     PAGES = {59--62},
      ISSN = {0012-365X},
   MRCLASS = {05D10 (05A18)},
  MRNUMBER = {1189829},
MRREVIEWER = {Karsten Steffens},
       DOI = {10.1016/0012-365X(92)90660-8},
       URL = {https://doi.org/10.1016/0012-365X(92)90660-8},
}

@article {LRW12,
    AUTHOR = {Leader, Imre and Russell, Paul A. and Walters, Mark},
     TITLE = {Transitive sets in {E}uclidean {R}amsey theory},
   JOURNAL = {J. Combin. Theory Ser. A},
  FJOURNAL = {Journal of Combinatorial Theory. Series A},
    VOLUME = {119},
      YEAR = {2012},
    NUMBER = {2},
     PAGES = {382--396},
      ISSN = {0097-3165},
   MRCLASS = {05D10 (52C10)},
  MRNUMBER = {2860600},
MRREVIEWER = {Peter D. Johnson, Jr.},
       DOI = {10.1016/j.jcta.2011.09.005},
       URL = {https://doi.org/10.1016/j.jcta.2011.09.005},
}

@article {Gr83,
    AUTHOR = {Graham, R. L.},
     TITLE = {Euclidean {R}amsey theorems on the {$n$}-sphere},
   JOURNAL = {J. Graph Theory},
  FJOURNAL = {Journal of Graph Theory},
    VOLUME = {7},
      YEAR = {1983},
    NUMBER = {1},
     PAGES = {105--114},
      ISSN = {0364-9024},
   MRCLASS = {05B30 (51M20 52A37)},
  MRNUMBER = {693027},
MRREVIEWER = {Stefan A. Burr},
       DOI = {10.1002/jgt.3190070114},
       URL = {https://doi.org/10.1002/jgt.3190070114},
}

@incollection {Gr85,
    AUTHOR = {Graham, R. L.},
     TITLE = {Old and new {E}uclidean {R}amsey theorems},
 BOOKTITLE = {Discrete geometry and convexity ({N}ew {Y}ork, 1982)},
    SERIES = {Ann. New York Acad. Sci.},
    VOLUME = {440},
     PAGES = {20--30},
 PUBLISHER = {New York Acad. Sci., New York},
      YEAR = {1985},
   MRCLASS = {05A17 (52A37)},
  MRNUMBER = {809188},
MRREVIEWER = {N. Hindman},
       DOI = {10.1111/j.1749-6632.1985.tb14535.x},
       URL = {https://doi.org/10.1111/j.1749-6632.1985.tb14535.x},
}

@incollection {FPRR18,
    AUTHOR = {Frankl, Peter and Pach, J\'{a}nos and Reiher, Christian and R\"{o}dl,
              Vojt\v{e}ch},
     TITLE = {Borsuk and {R}amsey type questions in {E}uclidean space},
 BOOKTITLE = {Connections in discrete mathematics},
     PAGES = {259--277},
 PUBLISHER = {Cambridge Univ. Press, Cambridge},
      YEAR = {2018},
   MRCLASS = {05C55 (05C65 05D10 52C10)},
  MRNUMBER = {3821843},
}

@article {CF18,
    AUTHOR = {Corsten, Jan and Frankl, N\'{o}ra},
     TITLE = {A note on diameter-{R}amsey sets},
   JOURNAL = {European J. Combin.},
  FJOURNAL = {European Journal of Combinatorics},
    VOLUME = {71},
      YEAR = {2018},
     PAGES = {51--54},
      ISSN = {0195-6698},
   MRCLASS = {05D10},
  MRNUMBER = {3802234},
       DOI = {10.1016/j.ejc.2018.02.036},
       URL = {https://doi.org/10.1016/j.ejc.2018.02.036},
}

@article {Gr04,
    AUTHOR = {Graham, Ron},
     TITLE = {Open problems in {E}uclidean {R}amsey theory},
   JOURNAL = {Geombinatorics},
  FJOURNAL = {Geombinatorics},
    VOLUME = {13},
      YEAR = {2004},
    NUMBER = {4},
     PAGES = {165--177},
      ISSN = {1065-7371},
   MRCLASS = {05D10 (52C99)},
  MRNUMBER = {2293083},
}

@article {DW87,
    AUTHOR = {Dekster, B. V. and Wilker, J. B.},
     TITLE = {Edge lengths guaranteed to form a simplex},
   JOURNAL = {Arch. Math. (Basel)},
  FJOURNAL = {Archiv der Mathematik},
    VOLUME = {49},
      YEAR = {1987},
    NUMBER = {4},
     PAGES = {351--366},
      ISSN = {0003-889X,1420-8938},
   MRCLASS = {51K05 (52A40)},
  MRNUMBER = {913169},
MRREVIEWER = {Arnold\ Neumaier},
       DOI = {10.1007/BF01210722},
       URL = {https://doi.org/10.1007/BF01210722},
}

\end{document}